\documentclass[11pt]{amsart}
\usepackage{amsmath, amssymb}
\usepackage{amscd}
\usepackage{verbatim}
\usepackage{amssymb,latexsym,amsmath,amsfonts}
\usepackage{latexsym}
\usepackage[mathscr]{eucal}
\usepackage{bm}

\title[Decomposition of a tetrablock contraction]
{Canonical decomposition of a tetrablock contraction and operator
model}

\author{Sourav Pal}
\address{Department of Mathematics, Indian Institute of Technology Bombay, Mumbai - 400076, India.}
\email{sourav@math.iitb.ac.in,
souravmaths@gmail.com}
\thanks{The author was supported by the INSPIRE Faculty Award
(Award No. DST/INSPIRE/04/2014/001462) of DST, India.}

\keywords{Tetrablock contraction, Canonical Decomposition,
Fundamental operators, Operator model}

\subjclass[2010]{47A13, 47A15, 47A20, 47A25, 47A45}


\def ~{\hspace{1mm}}

\def\textmatrix#1&#2\\#3&#4\\{\bigl({#1 \atop #3}\ {#2 \atop #4}\bigr)}
\def\dispmatrix#1&#2\\#3&#4\\{\left({#1 \atop #3}\ {#2 \atop #4}\right)}
\newcommand{\beg}{\begin{equation}}
\newcommand{\eeg}{\end{equation}}
\newcommand{\ben}{\begin{eqnarray*}}
\newcommand{\een}{\end{eqnarray*}}

\newtheorem{thm}{Theorem}[section]

\newtheorem{lem}[thm]{Lemma}

\newtheorem{prop}[thm]{Proposition}
\numberwithin{equation}{section}
\theoremstyle{definition}
\newtheorem{defn}[thm]{Definition}
\newtheorem{rem}[thm]{Remark}

\def\textmatrix#1&#2\\#3&#4\\{\bigl({#1 \atop #3}\ {#2 \atop #4}\bigr)}
\def\dispmatrix#1&#2\\#3&#4\\{\left({#1 \atop #3}\ {#2 \atop #4}\right)}

\begin{document}

\begin{abstract}
A triple of commuting operators for which the closed tetrablock
$\overline{\mathbb E}$ is a spectral set is called a tetrablock
contraction or an $\mathbb E$-contraction. The set $\mathbb E$ is
defined as
\[
\mathbb E = \{ (x_1,x_2,x_3)\in\mathbb C^3\,:\,
1-zx_1-wx_2+zwx_3\neq 0 \textup{ whenever } |z|\leq 1, |w|\leq 1
\}.
\]
We show that every $\mathbb E$-contraction can be uniquely written
as a direct sum of an $\mathbb E$-unitary and a completely
non-unitary $\mathbb E$-contraction. It is analogous to the
canonical decomposition of a contraction operator into a unitary
and a completely non-unitary contraction. We produce a concrete
operator model for such a triple satisfying some conditions.

\end{abstract}

\maketitle

\section{Introduction}

A compact subset $X$ of $\mathbb C^n$ is said to be a
\textit{spectral set} for a commuting $n$-tuple of bounded
operators $\underline{T}=(T_1,\hdots,T_n)$ defined on a Hilbert
space $\mathcal H$ if the Taylor joint spectrum
$\sigma(\underline{T})$ of $\underline{T}$ is a subset of $X$ and
\[
\|r(\underline{T})\|\leq \|r\|_{\infty,
X}=\sup\{|r(z_1,\hdots,z_n)|\,:\,(z_1,\hdots,z_n)\in\ X\}\,,
\]
for all rational functions $r$ in $\mathcal R(X)$. Here $\mathcal
R(X)$ denotes the algebra of all rational functions on $X$, that
is, all quotients $p/q$ of holomorphic polynomials $p,q$ in
$n$-variables for which $q$ has no zeros in $X$. A triple of
commuting operators $(A,B,P)$ for which the closure of the
tetrablock ${\mathbb E}$, where
\[
\mathbb E = \{ (x_1,x_2,x_3)\in\mathbb C^3\,:\,
1-zx_1-wx_2+zwx_3\neq 0 \textup{ whenever } |z|\leq 1, |w|\leq 1
\},
\]
is a spectral set is called a \textit{tetrablock contraction} or
an $\mathbb E$-\textit{contraction}.

Complex geometry, function theory and operator theory on the
tetrablock have been widely studied by a number of mathematicians
\cite{awy,awy-cor,tirtha,tirtha-sau,EZ,EKZ,sourav2,young,Zwo} over
past one decade because of the relevance of this domain to
$\mu$-synthesis problem and $H^{\infty}$ control theory. The
following result from \cite{awy} (Theorem 2.4 in \cite{awy})
characterizes points in $\mathbb E$ and $\overline{\mathbb E}$ and
provides a geometric description of the tetrablock.
\begin{thm}\label{thm:pre1}
A point $(x_1,x_2,x_3)\in \mathbb C^3$ is in $\overline{\mathbb
E}$ if and only if $|x_3|\leq 1$ and there exist $c_1,c_2 \in
\mathbb C$ such that $|c_1|+|c_2| \leq 1$ and $x_1=c_1 +
\bar{c_2}x_3, \quad x_2=c_2+\bar{c_1}x_3$.
\end{thm}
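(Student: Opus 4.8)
The plan is to move among four equivalent descriptions of $\overline{\mathbb{E}}$: the defining one; a pair of scalar Schur-type inequalities obtained by regarding $1-zx_1-wx_2+zwx_3$ as an affine function of $w$ (respectively of $z$); the parametrisation by $c_1,c_2$; and a $2\times2$ matricial realisation. The cornerstone is the identity
\[
1-za_{11}-wa_{22}+zw\det A=\det\big(I-\mathrm{diag}(z,w)\,A\big),
\]
valid for every $2\times2$ matrix $A=(a_{ij})$. If $\|A\|\le1$ the right-hand side is nonzero for $|z|,|w|<1$, and applying this to $rA$ ($0<r<1$) and letting $r\uparrow 1$ shows $(a_{11},a_{22},\det A)\in\overline{\mathbb{E}}$. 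Hence the ``if'' direction will amount to producing a contraction $A$ with $a_{11}=x_1$, $a_{22}=x_2$, $\det A=x_3$; the ``only if'' direction is where the work lies.

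Suppose first $(x_1,x_2,x_3)\in\mathbb{E}$. For each fixed $z$ with $|z|\le1$, the affine map $w\mapsto(1-zx_1)-w(x_2-zx_3)$ is non-vanishing on $\overline{\mathbb{D}}$, so $|1-zx_1|>|x_2-zx_3|$ for all $|z|\le1$. In particular $|x_1|<1$, and $f(z):=(x_2-x_3z)/(1-x_1z)$ is holomorphic on $\overline{\mathbb{D}}$ with $|f|<1$ there. On $|z|=1$ this says $2\,\mathrm{Re}\big((x_1-\overline{x_2}x_3)z\big)<1+|x_1|^2-|x_2|^2-|x_3|^2$, and taking the supremum over $z$ gives
\[
2\,|x_1-\overline{x_2}\,x_3|\ \le\ 1+|x_1|^2-|x_2|^2-|x_3|^2 .
\]
Solving $1-zx_1-wx_2+zwx_3=0$ for $z$ instead gives, symmetrically, $|x_2|<1$ and $2\,|x_2-\overline{x_1}\,x_3|\le1+|x_2|^2-|x_1|^2-|x_3|^2$. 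Passing to limits, these two inequalities and $|x_1|,|x_2|\le1$ (now non-strict) hold on all of $\overline{\mathbb{E}}$.

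Now the bookkeeping. Adding the two inequalities gives $|x_1-\overline{x_2}x_3|+|x_2-\overline{x_1}x_3|\le1-|x_3|^2$; inserting $|x_1-\overline{x_2}x_3|\ge|x_1|-|x_2||x_3|$ into the first gives $\big||x_2|-|x_3|\big|\le1-|x_1|$, and symmetrically $\big||x_1|-|x_3|\big|\le1-|x_2|$, whence $|x_3|\le1$. If $|x_3|<1$, set $c_1=(x_1-\overline{x_2}x_3)/(1-|x_3|^2)$ and $c_2=(x_2-\overline{x_1}x_3)/(1-|x_3|^2)$; a one-line computation yields $c_1+\overline{c_2}x_3=x_1$, $c_2+\overline{c_1}x_3=x_2$, and $|c_1|+|c_2|\le1$ from the sum above. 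If $|x_3|=1$, that sum forces $x_1=\overline{x_2}x_3$ and $x_2=\overline{x_1}x_3$, and $c_1=x_1$, $c_2=0$ work. Conversely, given $|x_3|\le1$ and such $c_1,c_2$, put $m=x_1x_2-x_3$, choose $\lambda$ with $\lambda^2=m/|m|$ (arbitrary if $m=0$), and let $A=\begin{pmatrix}x_1&|m|^{1/2}\lambda\\ |m|^{1/2}\lambda&x_2\end{pmatrix}$, so $\det A=x_3$. Since $\mathrm{tr}(A^*A)=|x_1|^2+|x_2|^2+2|m|$ and $\det(A^*A)=|x_3|^2$, one has $\|A\|\le1$ iff $|x_1|^2+|x_2|^2+2|x_1x_2-x_3|\le1+|x_3|^2$ (which, as $|x_3|\le1$, also keeps the trace $\le2$); substituting $x_1=c_1+\overline{c_2}x_3$, $x_2=c_2+\overline{c_1}x_3$ and simplifying, this collapses---after much cancellation---to
\[
(1-|x_3|^2)^2\Big((1-|c_1|^2-|c_2|^2)^2-4|c_1|^2|c_2|^2\Big)\ \ge\ 0 ,
\]
which is true because $|c_1|+|c_2|\le1$ is exactly the statement $1-|c_1|^2-|c_2|^2\ge2|c_1||c_2|$. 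The cornerstone identity then puts $(x_1,x_2,x_3)$ in $\overline{\mathbb{E}}$.

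The step I expect to be the main obstacle is this last one: $A$ is easy to write down, but checking $\|A\|\le1$ means recognising that the $\ell^1$-ball condition $|c_1|+|c_2|\le1$ is precisely what the simplification of $|x_1|^2+|x_2|^2+2|x_1x_2-x_3|\le1+|x_3|^2$ leaves over, and the cancellations are not transparent. A secondary nuisance is the degenerate cases $|x_1|=1$, $|x_2|=1$, or $|x_3|=1$ in the reduction step, where $f$ has a pole on the circle or $1-|x_3|^2$ vanishes; each is settled by hand (they force $x_3=x_1x_2$, respectively $x_1=\overline{x_2}x_3$). The remaining ingredients---the affine-in-each-variable observation, and the elementary $2\times2$ spectral computation---are routine.
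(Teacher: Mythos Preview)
The paper does not give its own proof of this statement: Theorem~\ref{thm:pre1} is quoted verbatim from \cite{awy} (Theorem~2.4 there) and used as a black box, so there is no argument in the present paper to compare yours against.

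That said, your outline is essentially the proof given in \cite{awy}, and it is correct in structure. A few remarks on the details:

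\begin{itemize}
\item In the ``if'' direction, your claimed reduction to $(1-|x_3|^2)^2\big((1-|c_1|^2-|c_2|^2)^2-4|c_1|^2|c_2|^2\big)\ge0$ is the identity obtained \emph{after squaring} the inequality $2|x_1x_2-x_3|\le 1+|x_3|^2-|x_1|^2-|x_2|^2$. For this to be legitimate you must first check that the right-hand side is nonnegative; this follows because, writing $\alpha=c_1c_2$ and $s=1-|c_1|^2-|c_2|^2$, one has $2|\alpha|\le s$ (equivalent to $|c_1|+|c_2|\le1$) and hence $(1+|x_3|^2)s-4\mathrm{Re}(\alpha\overline{x_3})\ge (1+|x_3|^2)s-4|\alpha||x_3|\ge s(1-|x_3|)^2\ge0$. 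Once that is noted, your squared identity does hold (it factors as $(1-|x_3|^2)^2\big(1-(|c_1|+|c_2|)^2\big)\big(1-(|c_1|-|c_2|)^2\big)$, visibly nonnegative).
\item Your criterion $\|A\|\le1\iff \mathrm{tr}(A^*A)\le 1+|\det A|^2$ also needs the side condition $|\det A|\le1$ (to rule out both eigenvalues of $A^*A$ exceeding $1$); you have this since $|\det A|=|x_3|\le1$, and you note it, but it is worth stating as part of the equivalence.
\item The degenerate boundary cases you flag are handled exactly as you indicate.
\end{itemize}

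In short: your proof is sound; the only thing left is to carry out the substitution you label ``after much cancellation'' in full, together with the nonnegativity check above.
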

It is clear from the above result that the closed tetrablock
$\overline{\mathbb E}$ lives inside the closed tridisc
$\overline{\mathbb D^3}$ and consequently an $\mathbb
E$-contraction consists of commuting contractions. It is evident
from the definition that if $(A,B,P)$ is an $\mathbb
E$-contraction then so is its adjoint $(A^*,B^*,P^*)$. We briefly
recall from literature some special classes of $\mathbb
E$-contractions which are analogous to unitaries, isometries,
co-isometries etc. in one variable operator theory.
\begin{defn}
Let $A,B,P$ be commuting operators on a Hilbert space $\mathcal
H$. We say that $(A,B,P)$ is
\begin{itemize}
\item [(i)] an $\mathbb E$-\textit{unitary} if $A,B,P$ are normal
operators and the joint spectrum $\sigma(A,B,P)$ is contained in
the distinguished boundary $b\overline{\mathbb E}$ of the
tetrablock , where
\begin{align*}
b \overline{\mathbb E} &=\{(x_1,x_2,x_3)\in \mathbb C^3\,:\,
x_1=\bar{x_2}x_3,|x_2|\leq 1, |x_3|=1 \}\\& =\{ (x_1,x_2,x_3)\in
\overline{\mathbb E}\,:\, |x_3|=1 \}.
\end{align*}

\item [(ii)] an $\mathbb E$-\textit{isometry} if there exists a
Hilbert space $\mathcal K$ containing $\mathcal H$ and an $\mathbb
E$-unitary $(\tilde{A},\tilde{B},\tilde{P})$ on $\mathcal K$ such
that $\mathcal H$ is a common invariant subspace of $A,B,P$ and
that $A=\tilde{A}|_{\mathcal H}, B=\tilde{B}|_{\mathcal H},
P=\tilde{P}|_{\mathcal H}$ ; \item [(iii)] an $\mathbb
E$-\textit{co-isometry} if $(A^*,B^*,P^*)$ is an $\mathbb
E$-isometry ; \item [(iv)] a \textit{completely non-unitary}
$\mathbb E$-contraction if $(A,B.P)$ is an $\mathbb E$-contraction
and $P$ is a completely non-unitary contraction ; \item [(v)] a
\textit{pure} $\mathbb E$-contraction if $(A,B.P)$ is an $\mathbb
E$-contraction and $P$ is a pure contraction, that is,
$P^{*n}\rightarrow 0$ strongly as $n\rightarrow \infty $.
\end{itemize}
\end{defn}

\begin{defn}
Let $(A,B,P)$ be an $\mathbb E$-contraction on a Hilbert space
$\mathcal H$. A commuting triple $(V_1,V_2,V_3)$ on $\mathcal K$
is said to be an $\mathbb E$-isometric dilation of $(A,B,P)$ if
$(V_1,V_2,V_3)$ is an $\mathbb E$-isometry, $\mathcal H \subseteq
\mathcal K$ and
\[
f(A,B,P)=P_{\mathcal H}f(V_1,V_2,V_3)|_{\mathcal H}
\]
for every holomorphic polynomial $f$ in three variables. Here
$P_{\mathcal H}$ denotes the projection onto $\mathcal H$.
Moreover, this dilation is called minimal if
\[
\mathcal K =\overline{\text{span}} \{ f(V_1,V_2,V_3)h\,:\,
h\in\mathcal H \,, f\in \mathbb C[z_1,z_2,z_3] \}.
\]
\end{defn}

It was a path breaking discovery by von Neumann, \cite{vN}, that a
bounded operator $T$ is a contraction if and only if the closed
unit disc $\overline{\mathbb D}$ in the complex plane is a
spectral set for $T$. It is well known that to every contraction
$T$ on a Hilbert space $\mathcal H$ there corresponds a
decomposition of $\mathcal H$ into an orthogonal sum of two
subspaces reducing $T$, say $\mathcal H=\mathcal H_1\oplus\mathcal
H_2$ such that $T|_{\mathcal H_1}$ is unitary and $T|_{\mathcal
H_2}$ is completely non-unitary; $\mathcal H_1$ or $\mathcal H_2$
may equal the trivial subspace $\{0\}$. This decomposition is
uniquely determined and is called the canonical decomposition of a
contraction (see Theorem 3.2 in Ch-I, \cite{nagy} for details).
Indeed, $\mathcal H_1$ consists of those elements $h\in\mathcal H$
for which
\[
\|T^nh\|=\|h\|=\|T^{*n}h\| \quad \quad (n=1,2,\hdots) \,.
\]
The main aim of this article is to show that an $\mathbb
E$-contraction admits an analogous decomposition into an $\mathbb
E$-unitary and a completely non-unitary $\mathbb E$-contraction.
Indeed, in Theorem \ref{thm:decomp}, one of the main results of
this paper, we show that for an $\mathbb E$-contraction $(A,B,P)$
defined on $\mathcal H$ if $\mathcal H_1\oplus \mathcal H_2$ is
the unique orthogonal decomposition of $\mathcal H$ into reducing
subspaces of $P$ such that $P|_{\mathcal H_1}$ is a unitary and
$P|_{\mathcal H_2}$ is a completely non-unitary, then $\mathcal
H_1, \mathcal H_2$ also reduce $A,B$ ; $(A|_{\mathcal
H_1},B|_{\mathcal H_1},P|_{\mathcal H_1})$ is an $\mathbb
E$-unitary and $(A|_{\mathcal H_2},B|_{\mathcal H_2},P|_{\mathcal
H_2})$ is a completely non-unitary $\mathbb E$-contraction.\\

The other contribution of this article is that we produce a
concrete operator model for an $\mathbb E$-contraction which
satisfies some conditions. Before getting into the details of it
we recall a few words from the literature about the fundamental
equations and the fundamental operators related to an $\mathbb
E$-contraction.

For an $\mathbb E$-contraction $(A,B,P)$, the \textit{fundamental
equations} were defined in \cite{tirtha} as
\begin{equation}\label{eqn:funda}
A-B^*P=D_PX_1D_P\,,\quad B-A^*P=D_PX_2D_P\,; \quad \quad
D_{P}=(I-P^*P)^{\frac{1}{2}}.
\end{equation}
It was proved in \cite{tirtha} (Theorem 3.5, \cite{tirtha}) that
corresponding to every $\mathbb E$-contraction $(A,B,P)$ there
were two unique operators $F_1,F_2$ in $\mathcal B(\mathcal
D_{P})$ that satisfied the fundamental equations, i.e,
\[
A-B^*P=D_{P}F_1D_{P}\, , \; B-A^*P=D_{P}F_2D_{P}\,.
\]
Here $\mathcal D_{P}=\overline{Ran}\,D_{P}$ and is called the
defect space of $P$. Also $\mathcal B(\mathcal H)$, for a Hilbert
space $\mathcal H$, always denotes the algebra of bounded
operators on $\mathcal H$. An explicit $\mathbb E$-isometric
dilation was constructed for a particular class of $\mathbb
E$-contractions in \cite{tirtha} (Theorem 6.1, \cite{tirtha}) and
$F_1,F_2$ played the fundamental role in that explicit
construction of dilation. For their pivotal role in the dilation,
$F_1$ and $F_2$ were called the \textit{fundamental
operators} of $(A,B,P)$.\\

It was shown in \cite{tirtha} (Theorem 6.1, \cite{tirtha}) that an
$\mathbb E$-contraction $(A,B,P)$ dilated to an $\mathbb
E$-isometry if the corresponding fundamental operators $F_1,F_2$
satisfied $[F_1,F_2]=0$ and $[F_1^*,F_1]=[F_2^*,F_2]$. Here
$[S_1,S_2]=S_1S_2-S_2S_1$ for any two bounded operators $S_1,S_2$.
On the other hand there are $\mathbb E$-contractions which do not
dilate. Indeed, an $\mathbb E$-contraction may not dilate to an
$\mathbb E$-isometry if $[F_1^*,F_1]\neq [F_2^*,F_2]$; it has been
established in \cite{sourav1} by a counter example. So it turns
out that those two conditions are very crucial for an $\mathbb
E$-contraction. In Theorem \ref{thm:model2}, we construct a
concrete model for an $\mathbb E$-contraction $(A,B,P)$ when the
fundamental operators $F_{1*},F_{2*}$ of $(A^*,B^*,P^*)$ satisfy
$[F_{1*},F_{2*}]=0$ and $[F_{1*}^*,F_{1*}]=[F_{2*}^*,F_{2*}]$. In
brief, such an $\mathbb E$-contraction is the restriction to a
common invariant subspace of an $\mathbb E$-co-isometry and every
$\mathbb E$-co-isometry is expressible as the orthogonal direct
sum of an $\mathbb E$-unitary and a pure $\mathbb E$-co-isometry,
which has a model on the vectorial Hardy space $H^2(\mathcal
D_{T_3})$, where $T_3^*$ is
the minimal isometric dilation of $P^*$.\\

In section 2, we accumulate a few new results about $\mathbb
E$-contractions and also state some results from the literature
which will be used in sequel.

\section{The set $\mathbb E$ and $\mathbb E$-contractions}

We begin this section with a lemma that characterizes the points
in $\overline{\mathbb E}$.

\begin{lem}\label{lem:1}
$(x_1,x_2,x_3)\in\overline{\mathbb E}$ if and only if $(\omega
x_1,\omega x_2, \omega^2 x_3)\in \overline{\mathbb E}$ for all
$\omega\in\mathbb T$.
\end{lem}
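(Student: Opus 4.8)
The plan is to exploit the explicit defining inequality of $\mathbb E$ together with Theorem~\ref{thm:pre1}, and to prove the equivalence by the natural substitution. First I would take $\omega \in \mathbb T$ and observe that for any $z,w$ with $|z|\le 1,|w|\le 1$ one has
\[
1-z(\omega x_1)-w(\omega x_2)+zw(\omega^2 x_3)=1-(\omega z)x_1-(\omega w)x_2+(\omega z)(\omega w)x_3,
\]
and since $\omega z$ and $\omega w$ range over the closed unit disc exactly when $z$ and $w$ do (as $|\omega|=1$), the triple $(\omega x_1,\omega x_2,\omega^2 x_3)$ satisfies the defining condition of $\mathbb E$ precisely when $(x_1,x_2,x_3)$ does. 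This already proves the statement for the open tetrablock $\mathbb E$; the closed version then follows by a limiting argument, or more directly by running the same substitution in the characterization of $\overline{\mathbb E}$ from Theorem~\ref{thm:pre1}.

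Concretely, for the closed set I would argue as follows. Suppose $(x_1,x_2,x_3)\in\overline{\mathbb E}$. By Theorem~\ref{thm:pre1} there exist $c_1,c_2\in\mathbb C$ with $|c_1|+|c_2|\le 1$ and $x_1=c_1+\bar c_2 x_3$, $x_2=c_2+\bar c_1 x_3$, and $|x_3|\le 1$. Fix $\omega\in\mathbb T$ and set $c_1'=\omega c_1$, $c_2'=\omega c_2$. Then $|c_1'|+|c_2'|=|c_1|+|c_2|\le 1$, $|\omega^2 x_3|=|x_3|\le 1$, and a direct computation gives
\[
c_1'+\overline{c_2'}\,(\omega^2 x_3)=\omega c_1+\bar c_2\bar\omega\,\omega^2 x_3=\omega(c_1+\bar c_2 x_3)=\omega x_1,
\]
and symmetrically $c_2'+\overline{c_1'}\,(\omega^2 x_3)=\omega x_2$. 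Hence, again by Theorem~\ref{thm:pre1}, $(\omega x_1,\omega x_2,\omega^2 x_3)\in\overline{\mathbb E}$. The converse direction is obtained by replacing $\omega$ with $\bar\omega$ and applying what has just been proved to the triple $(\omega x_1,\omega x_2,\omega^2 x_3)$, since $\bar\omega\cdot\omega x_j=x_j$ and $\bar\omega^2\cdot\omega^2 x_3=x_3$.

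I do not expect any real obstacle here; the only point requiring a little care is the bookkeeping of the conjugates in the identities $c_j'+\overline{c_{3-j}'}(\omega^2 x_3)=\omega x_j$, where the factor $\bar\omega$ coming from $\overline{c_2'}$ must cancel exactly one of the two $\omega$'s in $\omega^2 x_3$. Everything else is a routine verification, and the proof is symmetric in the two directions once the substitution is set up. I would present the argument via Theorem~\ref{thm:pre1} rather than through limits, since it is cleaner and self-contained.
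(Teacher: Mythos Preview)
Your proof is correct and follows essentially the same route as the paper: apply Theorem~\ref{thm:pre1}, replace $(c_1,c_2)$ by $(\omega c_1,\omega c_2)$, and check the identities; the paper dismisses the converse as trivial, while you spell it out via $\omega\mapsto\bar\omega$. Your additional remark using the defining inequality of $\mathbb E$ is a valid alternative, but the Theorem~\ref{thm:pre1} argument you ultimately give coincides with the paper's.
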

\begin{proof}
Let $(x_1,x_2,x_3)\in\overline{\mathbb E}$. Then by Theorem
\ref{thm:pre1}, $|x_3|\leq 1$ and there are complex numbers
$c_1,c_2$ with $|c_1|+|c_2|\leq 1$ such that
$x_1=c_1+\bar{c_2}x_3,\quad x_2=c_2+\bar{c_1}x_3$. For
$\omega\in\mathbb T$ if we choose $d_1=\omega c_1 \text{ and }
d_2=\omega c_2$ we see that $|d_1|+|d_2|\leq 1$ and
\begin{align*}
& \omega x_1 =\omega(c_1+\bar{c_2}x_3)=\omega c_1+\overline{\omega
c_2}(\omega^2 x_3)=d_1+\bar{d_2}(\omega^2 x_3)\,, \\& \omega
x_2=\omega(c_2+\bar{c_1}x_3)=\omega c_2+\overline{\omega
c_1}(\omega^2 x_3)=d_2+\bar{d_1}(\omega^2 x_3).
\end{align*}
Therefore, by Theorem \ref{thm:pre1}, $(\omega x_1,\omega x_2,
\omega^2 x_3)\in \mathbb E$. The other side of the proof is
trivial.

\end{proof}

The following lemma simplifies the definition of $\mathbb
E$-contraction.

\begin{lem}\label{lem:pre1}
A triple of commuting operators $(A,B,P)$ is an $\mathbb
E$-contraction if and only if
\[
\|f(A,B,P)\|\leq \|f\|_{\infty,\overline{\mathbb E}}=\sup
\{|f(x_1,x_2,x_3)|\,:\,(x_1,x_2,x_3)\in\overline{\mathbb E}\}
\]
for all holomorphic polynomials $f$ in three variables.
\end{lem}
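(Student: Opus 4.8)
The plan is to reduce the statement to the classical fact that $\overline{\mathbb{E}}$ is polynomially convex, so that the rational functions in $\mathcal{R}(\overline{\mathbb{E}})$ can be uniformly approximated on $\overline{\mathbb{E}}$ by holomorphic polynomials. Once that is in hand, the inequality $\|f(A,B,P)\| \le \|f\|_{\infty,\overline{\mathbb{E}}}$ for all holomorphic polynomials $f$ automatically upgrades, by a limiting argument, to $\|r(A,B,P)\| \le \|r\|_{\infty,\overline{\mathbb{E}}}$ for all $r \in \mathcal{R}(\overline{\mathbb{E}})$; the only remaining ingredient of the definition of $\mathbb{E}$-contraction, namely that the Taylor joint spectrum $\sigma(A,B,P)$ lies in $\overline{\mathbb{E}}$, must also be extracted from the polynomial inequality alone. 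So the two things to verify are: (i) polynomial convexity of $\overline{\mathbb{E}}$, and (ii) that a commuting triple $(A,B,P)$ satisfying the polynomial inequality necessarily has $\sigma(A,B,P) \subseteq \overline{\mathbb{E}}$.

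For step (i), I would use Theorem~\ref{thm:pre1}: a point $(x_1,x_2,x_3)$ lies in $\overline{\mathbb{E}}$ iff $|x_3|\le 1$ and there exist $c_1,c_2$ with $|c_1|+|c_2|\le 1$, $x_1 = c_1 + \bar c_2 x_3$, $x_2 = c_2 + \bar c_1 x_3$. This exhibits $\overline{\mathbb{E}}$ as (a continuous image, hence) a compact convex-like set; more to the point, one can check directly from this parametrization, or from the defining condition $1 - zx_1 - wx_2 + zw x_3 \ne 0$ on $\overline{\mathbb{D}}^2$ for $\mathbb{E}$ and a closure argument, that $\overline{\mathbb{E}}$ coincides with its polynomial hull. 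Concretely: if $(x_1,x_2,x_3)$ is in the polynomial hull of $\overline{\mathbb{E}}$, then for each fixed $(z,w)\in\overline{\mathbb{D}}^2$ the polynomial $1 - zx_1 - wx_2 + zwx_3$ is nonvanishing on $\overline{\mathbb{E}}$ except possibly on the boundary, and evaluating $|1 - zx_1 - wx_2 + zwx_3| \ge$ its minimum over $\overline{\mathbb{E}}$ forces the point into $\overline{\mathbb{E}}$; alternatively, I would simply cite the known polynomial convexity of $\overline{\mathbb{E}}$ from \cite{awy} or \cite{young}. For step (ii), I would invoke the spectral mapping theorem for the Taylor joint spectrum together with a standard argument: if $\lambda = (\lambda_1,\lambda_2,\lambda_3) \in \sigma(A,B,P)$ but $\lambda \notin \overline{\mathbb{E}}$, then by polynomial convexity there is a holomorphic polynomial $f$ with $|f(\lambda)| > \|f\|_{\infty,\overline{\mathbb{E}}}$, while $f(\lambda) \in \sigma(f(A,B,P))$ by the spectral mapping theorem forces $\|f(A,B,P)\| \ge |f(\lambda)| > \|f\|_{\infty,\overline{\mathbb{E}}}$, contradicting the hypothesis.

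Having established (i) and (ii), the forward direction of the lemma is immediate: an $\mathbb{E}$-contraction satisfies $\|r(A,B,P)\| \le \|r\|_{\infty,\overline{\mathbb{E}}}$ for all $r\in\mathcal{R}(\overline{\mathbb{E}})$, in particular for all holomorphic polynomials. For the converse, suppose the polynomial inequality holds. Step (ii) gives $\sigma(A,B,P)\subseteq\overline{\mathbb{E}}$, so every $r = p/q \in \mathcal{R}(\overline{\mathbb{E}})$ has $q$ nonvanishing on a neighborhood of $\sigma(A,B,P)$, hence $r(A,B,P)$ is well defined via the holomorphic functional calculus. Then I would approximate: fix $r\in\mathcal{R}(\overline{\mathbb{E}})$ and, using polynomial convexity of $\overline{\mathbb{E}}$ (Oka--Weil theorem), choose holomorphic polynomials $p_n \to r$ uniformly on $\overline{\mathbb{E}}$; since $\overline{\mathbb{E}}$ is a neighborhood-basis-free compact set, I would actually want uniform approximation on a polynomially convex compact neighborhood of $\sigma(A,B,P)$ contained in a slightly larger set, which Oka--Weil supplies. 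Then $p_n(A,B,P) \to r(A,B,P)$ in norm (continuity of the functional calculus), and $\|p_n(A,B,P)\| \le \|p_n\|_{\infty,\overline{\mathbb{E}}} \to \|r\|_{\infty,\overline{\mathbb{E}}}$, giving $\|r(A,B,P)\| \le \|r\|_{\infty,\overline{\mathbb{E}}}$.

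The main obstacle is the approximation step: one needs uniform approximation of $r$ by polynomials not merely on $\overline{\mathbb{E}}$ but on a compact polynomially convex set that serves as a good domain for the functional calculus of $(A,B,P)$, and one needs the multivariable holomorphic functional calculus to be both well defined and norm-continuous there. This is handled by the Oka--Weil theorem (valid precisely because $\overline{\mathbb{E}}$ is polynomially convex) combined with standard properties of the Taylor functional calculus; the estimate $|1 - zx_1 - wx_2 + zw x_3| \ne 0$ on $\overline{\mathbb{D}}^2$ characterizing $\mathbb{E}$ makes the polynomial convexity verification for step (i) the most computational part, but it is routine given Theorem~\ref{thm:pre1}.
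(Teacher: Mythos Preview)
Your proposal is correct and follows exactly the route the paper indicates: the paper does not give a self-contained proof but simply observes that the lemma ``follows from the fact that $\overline{\mathbb E}$ is polynomially convex'' and cites Lemma~3.3 of \cite{tirtha}, which is precisely the Oka--Weil/spectral-mapping argument you sketch. One small simplification: you do not need uniform polynomial approximation on a neighborhood of $\overline{\mathbb E}$---approximation on $\overline{\mathbb E}$ itself suffices, since the polynomial inequality makes $p\mapsto p(A,B,P)$ a contractive algebra homomorphism whose continuous extension must send $r=p/q$ to $p(A,B,P)q(A,B,P)^{-1}$.
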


This actually follows from the fact that $\overline{\mathbb E}$ is
polynomially convex. A proof to this could be found in
\cite{tirtha} (Lemma 3.3, \cite{tirtha}).

\begin{lem}\label{lem:2}
Let $(A,B,P)$ be an $\mathbb E$-contraction. Then so is $(\omega
A,\omega B,\omega^2 P)$ for any $\omega\in\mathbb T$.
\end{lem}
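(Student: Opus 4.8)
The plan is to reduce everything to Lemma \ref{lem:pre1} together with Lemma \ref{lem:1}. First I would note that $(\omega A,\omega B,\omega^2 P)$ is again a triple of commuting operators whenever $(A,B,P)$ is, since scaling commuting operators by constants preserves commutativity. Hence, by Lemma \ref{lem:pre1}, it suffices to verify the von Neumann type inequality $\|f(\omega A,\omega B,\omega^2 P)\|\le \|f\|_{\infty,\overline{\mathbb E}}$ for every holomorphic polynomial $f$ in three variables.

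Given such an $f$, I would introduce the auxiliary polynomial $g(x_1,x_2,x_3)=f(\omega x_1,\omega x_2,\omega^2 x_3)$, which is again a holomorphic polynomial in three variables, being the composition of $f$ with a diagonal linear change of variables. By the polynomial functional calculus we have $g(A,B,P)=f(\omega A,\omega B,\omega^2 P)$, and since $(A,B,P)$ is an $\mathbb E$-contraction, Lemma \ref{lem:pre1} yields $\|f(\omega A,\omega B,\omega^2 P)\|=\|g(A,B,P)\|\le \|g\|_{\infty,\overline{\mathbb E}}$.

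The only remaining point is to check that $\|g\|_{\infty,\overline{\mathbb E}}=\|f\|_{\infty,\overline{\mathbb E}}$, and this is where Lemma \ref{lem:1} is used: since $|\omega|=1$, the map $\Phi_\omega(x_1,x_2,x_3)=(\omega x_1,\omega x_2,\omega^2 x_3)$ sends $\overline{\mathbb E}$ bijectively onto itself, with inverse $\Phi_{\bar\omega}$. Consequently
\[
\|g\|_{\infty,\overline{\mathbb E}}=\sup_{(x_1,x_2,x_3)\in\overline{\mathbb E}}|f(\Phi_\omega(x_1,x_2,x_3))|=\sup_{(y_1,y_2,y_3)\in\overline{\mathbb E}}|f(y_1,y_2,y_3)|=\|f\|_{\infty,\overline{\mathbb E}}.
\]
Combining this with the previous inequality gives $\|f(\omega A,\omega B,\omega^2 P)\|\le \|f\|_{\infty,\overline{\mathbb E}}$ for all $f$, so $(\omega A,\omega B,\omega^2 P)$ is an $\mathbb E$-contraction by Lemma \ref{lem:pre1}.

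I do not expect a genuine obstacle; this is essentially a symmetry argument driven by Lemma \ref{lem:1}. The one thing to be careful about is to work through Lemma \ref{lem:pre1} (polynomial convexity of $\overline{\mathbb E}$) rather than through the bare definition of a spectral set, which would additionally require tracking $\sigma(\omega A,\omega B,\omega^2 P)$ against $\sigma(A,B,P)$ via the spectral mapping theorem and handling arbitrary rational functions in $\mathcal R(\overline{\mathbb E})$ — possible, but an unnecessary detour.
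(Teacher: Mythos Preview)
Your proof is correct and follows essentially the same route as the paper: define the auxiliary polynomial $g(x_1,x_2,x_3)=f(\omega x_1,\omega x_2,\omega^2 x_3)$, use Lemma~\ref{lem:1} to conclude $\|g\|_{\infty,\overline{\mathbb E}}=\|f\|_{\infty,\overline{\mathbb E}}$, and then invoke Lemma~\ref{lem:pre1}. The only differences are cosmetic---you spell out commutativity and the bijectivity of $\Phi_\omega$ explicitly, which the paper leaves implicit.
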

\begin{proof}
Let $f(x_1,x_2,x_3)$ be a holomorphic polynomial in the
co-ordinates of $\overline{\mathbb E}$ and for $\omega\in\mathbb
T$ let $ f_1(x_1,x_2,x_3)=f(\omega x_1,\omega x_2,\omega^2 x_3). $
It is evident from Lemma \ref{lem:1} that
\[
\sup\{|f(x_1,x_2,x_3)|\,:\,(x_1,x_2,x_3)\in\overline{\mathbb
E}\}=\sup\{|f_1(x_1,x_2,x_3)|\,:\,(x_1,x_2,x_3)\in\overline{\mathbb
E}\}.
\]
Therefore,
\begin{align*}
\|f(\omega A, \omega B, \omega^2 P)\|& =\|f_1(A,B,P)\| \\& \leq
\|f_1\|_{\infty, \overline{\mathbb E}} \\&
=\|f\|_{\infty,\overline{\mathbb E}}.
\end{align*}
Therefore, by Lemma \ref{lem:pre1}, $(\omega A, \omega B, \omega^2
P)$ is an $\mathbb E$-contraction.
\end{proof}

The following result was proved in \cite{tirtha} (see Theorem 3.5
in \cite{tirtha}).

\begin{thm}\label{thm:pre2}
Let $(A,B,P)$ be an $\mathbb E$-contraction. Then the operator
functions $\rho_1$ and $\rho_2$ defined by

\begin{align*}
\rho_1(A,B,P)&=(I-P^*P)+(A^*A-B^*B)-2\text{ Re
}(A-B^*P)\,,\\
\rho_2(A,B,P)&=(I-P^*P)+(B^*B-A^*A)-2\text{ Re }(B-A^*P)
\end{align*}
satisfy
\[
\rho_1(A,zB,zP)\geq 0 \text{ and } \rho_2(A,zB,zP)\geq 0 \text{
for all } z\in\overline{\mathbb D}.
\]

\end{thm}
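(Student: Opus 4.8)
The plan is to recognise each $\rho_j(A,zB,zP)$ as a difference of two positive operators, to reduce the whole statement to one operator inequality, and to derive that inequality from the spectral-set property of $(A,B,P)$ by testing it against a M\"obius-type rational function on $\overline{\mathbb E}$.

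First, a little bookkeeping. Expanding the right-hand sides gives, for any $\mathbb E$-contraction $(A,B,P)$ and any $z\in\overline{\mathbb D}$, the identities
\[
\rho_1(A,zB,zP)=(I-A)^*(I-A)-|z|^2(B-P)^*(B-P),\qquad \rho_2(A,zB,zP)=(I-zB)^*(I-zB)-(A-zP)^*(A-zP).
\]
Since $A,B,P$ are contractions, the second identity shows $\rho_2(A,zB,zP)\ge 0$ is equivalent to $(A-zP)^*(A-zP)\le(I-zB)^*(I-zB)$. From the first identity, $\rho_1(A,zB,zP)=\rho_1(A,B,P)+(1-|z|^2)(B-P)^*(B-P)$, so $\rho_1(A,zB,zP)\ge 0$ for all $z\in\overline{\mathbb D}$ is equivalent to $(B-P)^*(B-P)\le(I-A)^*(I-A)$, i.e. to $\rho_2(B,A,P)\ge 0$; moreover $(B,A,P)$ is again an $\mathbb E$-contraction because $\overline{\mathbb E}$ is invariant under the coordinate swap $x_1\leftrightarrow x_2$ (use Lemma \ref{lem:pre1}). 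Hence it is enough to prove, for every $\mathbb E$-contraction $(A,B,P)$ and every $z\in\overline{\mathbb D}$, the single inequality
\[
(A-zP)^*(A-zP)\le(I-zB)^*(I-zB).
\]

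To this end fix $z\in\mathbb D$ and set $g_z(x_1,x_2,x_3)=\dfrac{x_1-zx_3}{1-zx_2}$. The polynomial defining $\mathbb E$ factors, for this fixed $z$, as $1-wx_1-zx_2+wzx_3=(1-zx_2)\bigl(1-w\,g_z(x_1,x_2,x_3)\bigr)$, so membership of $(x_1,x_2,x_3)$ in $\mathbb E$ forces $1-zx_2\neq 0$ (take $w=0$) and then $|g_z|<1$. Since in addition $|zx_2|\le|z|<1$ throughout $\overline{\mathbb E}\subseteq\overline{\mathbb D^3}$, the denominator $1-zx_2$ is zero-free on $\overline{\mathbb E}$, so $g_z\in\mathcal R(\overline{\mathbb E})$, and $\|g_z\|_{\infty,\overline{\mathbb E}}\le 1$ by continuity and the density of $\mathbb E$ in $\overline{\mathbb E}$. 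As $\overline{\mathbb E}$ is a spectral set for the commuting triple $(A,B,P)$ and $I-zB$ is invertible (because $\|zB\|<1$), we obtain $\|(I-zB)^{-1}(A-zP)\|=\|g_z(A,B,P)\|\le 1$; applying $\|g_z(A,B,P)x\|\le\|x\|$ to $x=(I-zB)h$ and using that $A-zP$ commutes with $(I-zB)^{-1}$ gives $\|(A-zP)h\|\le\|(I-zB)h\|$ for every $h$, which is the desired inequality for $z\in\mathbb D$. Finally, both sides of that operator inequality are polynomials in $z,\bar z$ with operator coefficients, hence norm-continuous in $z$, so the estimate persists for $|z|=1$, completing the proof.

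I expect the genuinely delicate step to be the function-theoretic one inside the last paragraph: verifying that $g_z$ really belongs to $\mathcal R(\overline{\mathbb E})$ — that is, that its only pole $x_2=1/z$ misses $\overline{\mathbb E}$ — and that $\sup_{\overline{\mathbb E}}|g_z|\le 1$. This is precisely where the geometry of the tetrablock is used, through the factorisation of $1-zx_1-wx_2+zwx_3$ and the inclusion $\overline{\mathbb E}\subseteq\overline{\mathbb D^3}$ from Theorem \ref{thm:pre1}; by contrast, the algebraic identities, the reduction via the $x_1\leftrightarrow x_2$ symmetry, and the limiting argument at $|z|=1$ are all routine.
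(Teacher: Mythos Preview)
Your argument is correct. The algebraic identities
\[
\rho_1(A,zB,zP)=(I-A)^*(I-A)-|z|^2(B-P)^*(B-P),\qquad
\rho_2(A,zB,zP)=(I-zB)^*(I-zB)-(A-zP)^*(A-zP)
\]
are easily verified, the symmetry $(x_1,x_2,x_3)\mapsto(x_2,x_1,x_3)$ of $\overline{\mathbb E}$ follows at once from the defining condition (swap the roles of the test parameters), and the key step---that $g_z(x_1,x_2,x_3)=(x_1-zx_3)/(1-zx_2)$ lies in $\mathcal R(\overline{\mathbb E})$ with $\|g_z\|_{\infty,\overline{\mathbb E}}\le 1$ for $|z|<1$---is handled correctly via the factorisation $1-wx_1-zx_2+wzx_3=(1-zx_2)\bigl(1-w\,g_z\bigr)$ together with $\overline{\mathbb E}\subseteq\overline{\mathbb D^3}$. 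The passage from $\|g_z(A,B,P)\|\le 1$ to $\|(A-zP)h\|\le\|(I-zB)h\|$ and the continuity argument at $|z|=1$ are routine.

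As for comparison with the paper: the paper does \emph{not} prove this statement; it merely quotes it as Theorem~3.5 of \cite{tirtha}. Your proof is in fact essentially the argument given there: the function $g_z$ is the function $\Psi(z,\cdot)$ that is central to the characterisations of $\mathbb E$ in \cite{awy} and \cite{tirtha}, and the positivity of $\rho_2$ is obtained in \cite{tirtha} exactly by applying the spectral-set inequality to this rational function. Your reduction of the $\rho_1$ inequality to the $\rho_2$ inequality via the $x_1\leftrightarrow x_2$ symmetry is a small streamlining; in \cite{tirtha} the two cases are treated in parallel using the companion function $(x_2-zx_3)/(1-zx_1)$.
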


\begin{lem}\label{lem:3}
Let $(A,B,P)$ be an $\mathbb E$-contraction. Then for $i=1,2$,
$\rho_i(\omega A, \omega B, \omega^2 P)\geq 0$ for all
$\omega\in\mathbb T$.
\end{lem}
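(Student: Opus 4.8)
The plan is to obtain this as an immediate consequence of Lemma \ref{lem:2} and Theorem \ref{thm:pre2}. Fix $\omega\in\mathbb T$. By Lemma \ref{lem:2}, the commuting triple $(\omega A,\omega B,\omega^2 P)$ is again an $\mathbb E$-contraction, so Theorem \ref{thm:pre2} applies to it and yields
\[
\rho_i\big((\omega A),\,z(\omega B),\,z(\omega^2 P)\big)\geq 0 \qquad \text{for all } z\in\overline{\mathbb D} \text{ and } i=1,2.
\]
Taking $z=1\in\overline{\mathbb D}$ gives $\rho_i(\omega A,\omega B,\omega^2 P)\geq 0$ for $i=1,2$. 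Since $\omega\in\mathbb T$ was arbitrary, this is the desired conclusion.

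For a self-contained verification one may expand the operator functions directly. As $|\omega|=1$, the substitution $A\mapsto\omega A$, $B\mapsto\omega B$, $P\mapsto\omega^2 P$ leaves the self-adjoint terms $I-P^*P$, $A^*A$ and $B^*B$ unchanged (each resulting unimodular scalar multiplies its own conjugate), while it sends $A-B^*P$ to $\omega(A-B^*P)$ and $B-A^*P$ to $\omega(B-A^*P)$. Hence
\begin{align*}
\rho_1(\omega A,\omega B,\omega^2 P)&=(I-P^*P)+(A^*A-B^*B)-2\text{ Re }\big(\omega(A-B^*P)\big),\\
\rho_2(\omega A,\omega B,\omega^2 P)&=(I-P^*P)+(B^*B-A^*A)-2\text{ Re }\big(\omega(B-A^*P)\big),
\end{align*}
and positivity of both for every $\omega\in\mathbb T$ is exactly what Theorem \ref{thm:pre2} delivers once it is applied to the rotated $\mathbb E$-contraction $(\omega A,\omega B,\omega^2 P)$ and evaluated at $z=1$.

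I expect no genuine obstacle here, since the lemma is a corollary of the two preceding results. The only step needing a moment's care is the bookkeeping of the unimodular scalars through adjoints and products — in particular that $(\omega^2 P)^*(\omega^2 P)=P^*P$ and $(\omega B)^*(\omega^2 P)=\omega\,B^*P$ — which is what makes the rotated $\rho_i$ coincide with the $\rho_i$ of the rotated triple at $z=1$ rather than requiring a fresh estimate.
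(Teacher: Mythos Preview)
Your proof is correct and follows exactly the same route as the paper: use Lemma \ref{lem:2} to see that $(\omega A,\omega B,\omega^2 P)$ is an $\mathbb E$-contraction, then apply Theorem \ref{thm:pre2} (at $z=1$) to that triple. The explicit expansion you add is accurate extra commentary but not needed for the argument.
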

\begin{proof}
By Theorem \ref{thm:pre2},
\[
\rho_1(A,B,P)\geq 0 \text{ and } \rho_2(A,B,P)\geq 0.
\]
Since $(\omega A,\omega B,\omega^2 P)$ is an $\mathbb
E$-contraction for every $\omega$ in $\mathbb T$ by Lemma
\ref{lem:2}, we have that
\[
\rho_1(\omega A,\omega B,\omega^2 P)\geq 0 \text{ and }
\rho_2(\omega A,\omega B,\omega^2 P)\geq 0 \,.
\]
\end{proof}

The following theorem provides a set of characterizations for
$\mathbb E$-unitaries and for a proof to this one can see Theorem
5.4 in \cite{tirtha}.

\begin{thm}\label{thm:tu}
Let $\underline N = (N_1, N_2, N_3)$ be a commuting triple of
bounded operators. Then the following are equivalent.

\begin{enumerate}

\item $\underline N$ is an $\mathbb E$-unitary,

\item $N_3$ is a unitary and $\underline N$ is an $\mathbb
E$-contraction,

\item $N_3$ is a unitary, $N_2$ is a contraction and $N_1 = N_2^*
N_3$.
\end{enumerate}
\end{thm}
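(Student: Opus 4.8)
The plan is to establish the chain of implications $(1)\Rightarrow(2)\Rightarrow(3)\Rightarrow(1)$. The implication $(1)\Rightarrow(2)$ is essentially immediate: if $\underline N$ is an $\mathbb E$-unitary then $N_1,N_2,N_3$ are commuting normals with joint spectrum in $b\overline{\mathbb E}\subseteq\overline{\mathbb E}$, so $\overline{\mathbb E}$ is certainly a spectral set (the spectral inequality for a normal tuple reduces to the sup of $|r|$ over the joint spectrum via the spectral theorem), hence $\underline N$ is an $\mathbb E$-contraction; and since every point of $b\overline{\mathbb E}$ has third coordinate of modulus $1$, the normal operator $N_3$ has spectrum on $\mathbb T$ and is therefore unitary.

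For $(2)\Rightarrow(3)$: assume $N_3$ is unitary and $\underline N$ is an $\mathbb E$-contraction. Since $\overline{\mathbb E}\subseteq\overline{\mathbb D^3}$ (a consequence of Theorem \ref{thm:pre1}), each $N_i$ is a contraction; in particular $N_2$ is a contraction. The real content is the identity $N_1=N_2^*N_3$. Here I would use the $\rho$-functions. Applying Theorem \ref{thm:pre2} with $z=1$ gives $\rho_1(N_1,N_2,N_3)\geq 0$ and $\rho_2(N_1,N_2,N_3)\geq 0$; adding these two inequalities and using $N_3^*N_3=I$ (so $I-N_3^*N_3=0$) yields
\[
-2\,\text{Re}\,(N_1-N_2^*N_3)-2\,\text{Re}\,(N_2-N_1^*N_3)\geq 0.
\]
Next I would invoke Lemma \ref{lem:3}: since $(\omega N_1,\omega N_2,\omega^2 N_3)$ is again an $\mathbb E$-contraction for every $\omega\in\mathbb T$ and $\omega^2 N_3$ is still unitary, the same computation gives
\[
-2\,\text{Re}\,\big(\omega(N_1-N_2^*N_3)\big)-2\,\text{Re}\,\big(\omega(N_2-N_1^*N_3)\big)\geq 0
\]
for all $\omega\in\mathbb T$. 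Averaging over $\omega$ (or choosing $\omega=\pm1,\pm i$) forces $(N_1-N_2^*N_3)+(N_2-N_1^*N_3)=0$, i.e. $N_1+N_2-N_2^*N_3-N_1^*N_3=0$. Applying the same argument to the adjoint triple $(N_1^*,N_2^*,N_3^*)$ (which is also an $\mathbb E$-contraction with $N_3^*$ unitary) and taking adjoints gives a second relation; combining the two should isolate $N_1=N_2^*N_3$ (and symmetrically $N_2=N_1^*N_3$). I expect this averaging/adjoint bookkeeping to be the main obstacle — not deep, but one must be careful that the two scalar relations obtained are genuinely independent and together pin down $N_1$ rather than merely a symmetric combination; a cleaner route may be to feed the inequality $\rho_1(N_1,zN_2,zN_3)\geq 0$ for a continuum of $z$ directly, since positivity of an operator pencil that is affine in $\text{Re}(z\cdot)$ across all of $\overline{\mathbb D}$ forces the offending term to vanish.

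For $(3)\Rightarrow(1)$: assume $N_3$ is unitary, $N_2$ is a contraction, and $N_1=N_2^*N_3$. First, commutativity of $\underline N$ together with $N_1=N_2^*N_3$ and unitarity of $N_3$ forces $N_2$ to be normal: from $N_1N_3=N_3N_1$ one gets $N_2^*N_3^2=N_3N_2^*N_3$, so $N_2^*N_3=N_3N_2^*$, i.e. $N_2^*$ commutes with $N_3$, and since $N_2$ commutes with $N_3$ as well, one deduces $N_2$ commutes with $N_2^*$ via $N_1N_2=N_2N_1$. Hence $N_1=N_2^*N_3$ is a product of commuting normals and is normal, so $N_1,N_2,N_3$ are commuting normals. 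It remains to locate the joint spectrum in $b\overline{\mathbb E}$: by the spectral theorem for the commuting normal tuple, the joint spectrum consists of points $(\lambda_1,\lambda_2,\lambda_3)$ with $|\lambda_3|=1$ (as $N_3$ is unitary), $|\lambda_2|\leq 1$ (as $N_2$ is a contraction), and $\lambda_1=\overline{\lambda_2}\lambda_3$ (as $N_1=N_2^*N_3$); by the description of $b\overline{\mathbb E}$ recalled in the definition, every such point lies in $b\overline{\mathbb E}$. Therefore $\underline N$ is an $\mathbb E$-unitary, completing the cycle.
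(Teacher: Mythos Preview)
The paper does not actually prove Theorem~\ref{thm:tu}; it is quoted from \cite{tirtha} (Theorem~5.4 there), so there is no in-paper proof to compare against. I will therefore assess your argument on its own and point out how the paper's own machinery (from the proof of Theorem~\ref{thm:decomp}) resolves the gap you yourself flag.

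Your implications $(1)\Rightarrow(2)$ and $(3)\Rightarrow(1)$ are correct as written. The normality argument in $(3)\Rightarrow(1)$ is clean: from $N_1N_3=N_3N_1$ and $N_1=N_2^*N_3$ with $N_3$ unitary you get $N_2^*N_3=N_3N_2^*$, and then $N_1N_2=N_2N_1$ forces $N_2^*N_2=N_2N_2^*$; the spectral description of $b\overline{\mathbb E}$ then finishes it.

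The genuine gap is exactly where you suspect, in $(2)\Rightarrow(3)$. Using a \emph{single} parameter $\omega$ for both $\rho_1$ and $\rho_2$ and adding gives only
\[
(N_1-N_2^*N_3)+(N_2-N_1^*N_3)=0,
\]
and passing to the adjoint triple yields the \emph{same} relation (since $N_3$ is unitary and, by Fuglede, commutes with $N_1^*+N_2^*$). So the two equations are not independent and you cannot isolate $N_1=N_2^*N_3$ from them. Your proposed fallback via $\rho_1(N_1,zN_2,zN_3)\ge 0$ does not help either: in that expression the cross term is $N_1-|z|^2N_2^*N_3$, so the phase of $z$ drops out and no new averaging is available.

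The fix is the two-parameter trick used in the paper's proof of Theorem~\ref{thm:decomp}: by Lemma~\ref{lem:3} one has $\rho_1(\omega N_1,\omega N_2,\omega^2 N_3)\ge 0$ and $\rho_2(\beta N_1,\beta N_2,\beta^2 N_3)\ge 0$ for \emph{independent} $\omega,\beta\in\mathbb T$; adding and using $N_3^*N_3=I$ gives
\[
\text{Re}\,\omega(N_1-N_2^*N_3)+\text{Re}\,\beta(N_2-N_1^*N_3)\le 0\quad\text{for all }\omega,\beta\in\mathbb T.
\]
Testing against a vector $h$ and maximizing over $\omega,\beta$ yields $|\langle (N_1-N_2^*N_3)h,h\rangle|+|\langle (N_2-N_1^*N_3)h,h\rangle|\le 0$, hence both quadratic forms vanish and $N_1=N_2^*N_3$, $N_2=N_1^*N_3$. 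With this adjustment your cycle closes.
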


\noindent Here is a structure theorem for the $\mathbb
E$-isometries (see Theorem 5.6 and 5.7 in \cite{tirtha}).

\begin{thm} \label{thm:ti}

Let $\underline V = (V_1, V_2, V_3)$ be a commuting triple of
bounded operators. Then the following are equivalent.

\begin{enumerate}

\item $\underline V$ is an $\mathbb E$-isometry.

\item $V_3$ is an isometry and $\underline V$ is an $\mathbb
E$-contraction.

\item $V_3$ is an isometry, $V_2$ is a contraction and $V_1=V_2^*
V_3$.

\item (\textit{Wold decomposition}) $\mathcal H$ has a
decomposition $\mathcal H=\mathcal H_1\oplus \mathcal H_2$ into
reducing subspaces of $V_1,V_2,V_3$ such that $(V_1|_{\mathcal
H_1},V_2|_{\mathcal H_1},V_3|_{\mathcal H_1})$ is an $\mathbb
E$-unitary and $(V_1|_{\mathcal H_2},V_2|_{\mathcal
H_2},V_3|_{\mathcal H_2})$ is a pure $\mathbb E$-isometry.
\end{enumerate}
\end{thm}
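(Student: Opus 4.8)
The plan is to prove the cycle $(1)\Rightarrow(2)\Rightarrow(3)\Rightarrow(1)$ and then $(3)\Rightarrow(4)\Rightarrow(1)$, which makes the four statements equivalent. The implication $(1)\Rightarrow(2)$ is immediate: an $\mathbb E$-isometry is the restriction of an $\mathbb E$-unitary $(\tilde A,\tilde B,\tilde P)$ to a common invariant subspace $\mathcal H$, and by Theorem \ref{thm:tu} $\tilde P$ is unitary, so its restriction $V_3$ is an isometry; moreover an $\mathbb E$-unitary is an $\mathbb E$-contraction (again Theorem \ref{thm:tu}) and, by Lemma \ref{lem:pre1}, the restriction of an $\mathbb E$-contraction to an invariant subspace is an $\mathbb E$-contraction. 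For $(2)\Rightarrow(3)$, the crucial step: since $\overline{\mathbb E}\subseteq\overline{\mathbb D^3}$, Lemma \ref{lem:pre1} applied to the coordinate polynomials gives that $V_1,V_2,V_3$ are contractions. To obtain $V_1=V_2^*V_3$ I would invoke Lemma \ref{lem:3}: $\rho_1(\omega V_1,\omega V_2,\omega^2 V_3)\ge 0$ and $\rho_2(\omega V_1,\omega V_2,\omega^2 V_3)\ge 0$ for every $\omega\in\mathbb T$. Since $V_3$ is an isometry, $I-(\omega^2 V_3)^*(\omega^2 V_3)=0$, so these reduce to $(V_1^*V_1-V_2^*V_2)-2\,\textup{Re}\,\bigl(\omega(V_1-V_2^*V_3)\bigr)\ge 0$ and $(V_2^*V_2-V_1^*V_1)-2\,\textup{Re}\,\bigl(\omega(V_2-V_1^*V_3)\bigr)\ge 0$. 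Integrating each over $\mathbb T$ in $\omega$ kills the real-part terms and yields $V_1^*V_1=V_2^*V_2$; feeding this back leaves $-\,\textup{Re}\,\bigl(\omega(V_1-V_2^*V_3)\bigr)\ge 0$ for all $\omega\in\mathbb T$, and choosing $\omega\in\{1,-1,i,-i\}$ forces $V_1-V_2^*V_3=0$.

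For $(3)\Rightarrow(1)$ I would first note that $V_2$ is normal: from $V_1V_2=V_2V_1$, $V_1=V_2^*V_3$ and $V_2V_3=V_3V_2$ one gets $(V_2^*V_2-V_2V_2^*)V_3=0$, and $V_3$ is injective. By Fuglede's theorem $V_2^*$ then also commutes with $V_3$. Let $\tilde P$ on $\mathcal K\supseteq\mathcal H$ be the minimal unitary extension of the isometry $V_3$, so $\mathcal K=\overline{\textup{span}}\{\tilde P^{-n}h:\,n\ge 0,\ h\in\mathcal H\}$. Since $V_2$ and $V_2^*$ both commute with $V_3$, the rule $\tilde B(\tilde P^{-n}h)=\tilde P^{-n}V_2h$ is well defined and extends to a contraction on $\mathcal K$ commuting with $\tilde P$, and the analogous map built from $V_2^*$ is $\tilde B^*$, so $\tilde B$ is normal. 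Put $\tilde A=\tilde B^*\tilde P$. Then $(\tilde A,\tilde B,\tilde P)$ is commuting with $\tilde P$ unitary, $\tilde B$ a contraction and $\tilde A=\tilde B^*\tilde P$, hence an $\mathbb E$-unitary by Theorem \ref{thm:tu}; one checks $\mathcal H$ is invariant with $(\tilde A|_{\mathcal H},\tilde B|_{\mathcal H},\tilde P|_{\mathcal H})=(V_1,V_2,V_3)$, so $\underline V$ is an $\mathbb E$-isometry.

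For $(3)\Rightarrow(4)$ I would take the Wold decomposition $\mathcal H=\mathcal H_1\oplus\mathcal H_2$ of $V_3$, with $V_3|_{\mathcal H_1}$ unitary and $V_3|_{\mathcal H_2}$ a pure isometry. The orthogonal projection onto $\mathcal H_1$ equals $\textup{s-}\lim_n V_3^nV_3^{*n}$ and so lies in $W^*(V_3)$; since $V_2$ (being normal and commuting with $V_3$) commutes with both $V_3$ and $V_3^*$, it commutes with this projection, so $\mathcal H_1$ and $\mathcal H_2$ reduce $V_2$, hence also $V_1=V_2^*V_3$ and $V_3$. On $\mathcal H_1$ the restricted triple satisfies the hypotheses of Theorem \ref{thm:tu}(3), so it is an $\mathbb E$-unitary; on $\mathcal H_2$ it satisfies $(3)$ with pure isometric third coordinate, so by $(3)\Rightarrow(1)$ it is an $\mathbb E$-isometry, i.e.\ a pure $\mathbb E$-isometry. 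Finally $(4)\Rightarrow(1)$: each summand extends to an $\mathbb E$-unitary on a possibly larger space, and the orthogonal direct sum of two $\mathbb E$-unitaries is an $\mathbb E$-unitary (normality is preserved and the Taylor joint spectrum of a direct sum is the union of the pieces), so $\underline V$ itself extends to an $\mathbb E$-unitary and is an $\mathbb E$-isometry. I expect $(2)\Rightarrow(3)$ to be the main obstacle — converting the soft positivity of $\rho_1,\rho_2$ into the rigid identity $V_1=V_2^*V_3$ — and the averaging-over-$\mathbb T$ device is what resolves it; the extension construction in $(3)\Rightarrow(1)$ is the other technical point but is routine once $V_2$ is known to be normal.
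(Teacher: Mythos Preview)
The paper does not actually supply a proof of Theorem~\ref{thm:ti}; it is quoted from the literature with the parenthetical ``see Theorem 5.6 and 5.7 in \cite{tirtha}''. So there is no in-paper argument to compare against, and your proposal stands on its own as a self-contained proof.

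Your argument is correct. The scheme $(1)\Rightarrow(2)\Rightarrow(3)\Rightarrow(1)$ together with $(3)\Rightarrow(4)\Rightarrow(1)$ closes the equivalences. The key step $(2)\Rightarrow(3)$ is handled exactly as the paper's toolkit suggests: once $V_3$ is an isometry the defect term in $\rho_1,\rho_2$ vanishes, averaging over $\omega\in\mathbb T$ gives $V_1^*V_1=V_2^*V_2$, and then the choices $\omega=\pm1,\pm i$ force $V_1=V_2^*V_3$. In $(3)\Rightarrow(1)$ the observation that $V_2$ is normal (from $(V_2^*V_2-V_2V_2^*)V_3=0$ and injectivity of $V_3$) together with Fuglede to get $[V_2,V_3^*]=0$ is precisely what makes the minimal unitary extension $\tilde P$ of $V_3$ carry a commuting normal extension $\tilde B$ of $V_2$; the well-definedness and contractivity of $\tilde B$ on finite sums $\sum_j\tilde P^{-n_j}h_j$ follow by rewriting such a sum as $\tilde P^{-N}h$ for a single $h\in\mathcal H$, which you should state explicitly. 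For $(3)\Rightarrow(4)$ the use of $P_{\mathcal H_1}=\textup{s-}\lim_nV_3^nV_3^{*n}$ and the commutation $[V_2,V_3]=[V_2,V_3^*]=0$ is the clean way to see that the Wold pieces reduce $V_2$ and hence $V_1=V_2^*V_3$. The direct-sum argument for $(4)\Rightarrow(1)$ is fine since normality and the containment of the joint spectrum in $b\overline{\mathbb E}$ are preserved under orthogonal sums.

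In short: your proof is sound and fills in what the paper merely cites; the only cosmetic suggestion is to spell out the boundedness of $\tilde B$ on linear combinations in the extension step.
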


\section{Canonical decomposition of an $\mathbb E$-contraction}

\begin{thm}\label{thm:decomp}
Let $(A,B,P)$ be an $\mathbb E$-contraction on a Hilbert space
$\mathcal H$. Let $\mathcal H_1$ be the maximal subspace of
$\mathcal H$ which reduces $P$ and on which $P$ is unitary. Let
$\mathcal H_2=\mathcal H\ominus \mathcal H_1$. Then $\mathcal
H_1,\mathcal H_2$ reduce $A, B$; $(A|_{\mathcal H_1},B|_{\mathcal
H_1},P|_{\mathcal H_1})$ is an $\mathbb E$-unitary and
$(A|_{\mathcal H_2},B|_{\mathcal H_2},P|_{\mathcal H_2})$ is a
completely non-unitary $\mathbb E$-contraction. The subspaces
$\mathcal H_1$ or $\mathcal H_2$ may equal to the trivial subspace
$\{0\}$.
\end{thm}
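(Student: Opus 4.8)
The plan is to use the classical canonical decomposition of the contraction $P$ as the starting point, and then show that the reducing subspace $\mathcal H_1$ of $P$ (on which $P$ is unitary) is automatically reducing for $A$ and $B$ as well. Recall that $\mathcal H_1 = \{h \in \mathcal H : \|P^n h\| = \|h\| = \|P^{*n}h\| \text{ for all } n\geq 1\}$. The heart of the argument is to prove that $A\mathcal H_1 \subseteq \mathcal H_1$ and $A^*\mathcal H_1\subseteq \mathcal H_1$ (and similarly for $B$), after which the statement splits into verifying that the two pieces are an $\mathbb E$-unitary and a completely non-unitary $\mathbb E$-contraction respectively; the latter is essentially immediate from the definitions once the reduction is established.

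To get the invariance of $\mathcal H_1$ under $A$, I would exploit the positivity in Theorem \ref{thm:pre2} together with the rotation trick of Lemma \ref{lem:3}. Fix $h\in\mathcal H_1$. Since $P|_{\mathcal H_1}$ is unitary, $D_P h = 0$, i.e., $P^*Ph = h$. The key identity to extract is $Ah = B^*Ph$ when $h\in\mathcal H_1$. To see this, I would apply $\rho_1(\omega A,\omega B,\omega^2 P)\geq 0$ and $\rho_2(\omega A,\omega B,\omega^2 P)\geq 0$ to the vector $h$ and average over $\omega\in\mathbb T$. Because $(I-P^*P)h = 0$ and $P^*Ph=h$, the terms $\langle \rho_1(\omega A,\omega B,\omega^2 P)h,h\rangle$ reduce to an expression of the form $\|Ah\|^2 - \|Bh\|^2 - 2\operatorname{Re}(\bar\omega\langle(A - B^*P)h,h\rangle)$, and symmetrically for $\rho_2$ with the roles of $A,B$ swapped. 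Adding the two $\rho_i$ inequalities kills the cross terms $A^*A - B^*B$, and integrating against $\mathbb T$ removes the $\omega$-dependent linear term, leaving $\|Ah\|^2 + \|Bh\|^2 \le 0$ is not quite what happens — rather, the right combination (subtracting, or using $\rho_1$ alone with the averaging in $\omega$) forces $\langle (A-B^*P)h,h\rangle = 0$ and then the remaining positive-semidefinite quadratic form forces $Ah = B^*Ph$ and $Bh = A^*Ph$. Once $Ah = B^*Ph$ with $P|_{\mathcal H_1}$ unitary, one checks $\|Ah\| \le \|Bh\|\le \dots$; combined with $\|A^*h\|$-type estimates obtained by running the same argument for the $\mathbb E$-contraction $(A^*,B^*,P^*)$, this pins down that $Ah$ and $A^*h$ lie in $\mathcal H_1$.

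More carefully, the cleanest route is probably: from $\rho_1(\omega A,\omega B,\omega^2P)\ge 0$ evaluated at $h$, after using $D_Ph=0$ one gets $\|Ah\|^2-\|Bh\|^2 \ge 2\operatorname{Re}\big(\overline\omega\langle Ah,h\rangle - \omega\langle Bh,Ph\rangle\big)$ for all $\omega\in\mathbb T$, and likewise from $\rho_2$ one gets $\|Bh\|^2-\|Ah\|^2\ge 2\operatorname{Re}\big(\overline\omega\langle Bh,h\rangle - \omega\langle Ah,Ph\rangle\big)$. Summing, the left side is $0$, so $\operatorname{Re}(\overline\omega \lambda)\le 0$ for all $\omega\in\mathbb T$ where $\lambda = \langle Ah,h\rangle + \langle Bh,h\rangle - \langle Bh,Ph\rangle - \langle Ah,Ph\rangle$ — forcing $\lambda = 0$ and, by choosing $\omega$ appropriately, each of the two $\rho_i$ inequalities to become an equality $\|Ah\|=\|Bh\|$ with the linear forms vanishing; unpacking the equality case of $\rho_1\ge0$ (which as a quadratic form in the relevant vectors is a sum of squares) yields $Ah = B^*Ph$ and $Bh=A^*Ph$. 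Then $PAh = PB^*Ph = B^*P^2h$ (using commutativity $PB^*$? — no, $B^*$ need not commute with $P$), so instead I use: $P^*Ah = P^*B^*Ph = B^*P^*Ph = B^*h$ (here $P^*B^* = B^*P^*$ since $BP=PB$), and then $\|Ah\|^2 = \langle Ah, B^*Ph\rangle = \langle PB? \rangle$... the bookkeeping here is exactly the main obstacle. The expected difficulty is precisely this: carefully manipulating these identities with the commutation relations $AB=BA$, $AP=PA$, $BP=PB$ (and their adjoint consequences) to conclude $\|P^n Ah\| = \|Ah\|$ and the analogous co-isometric estimate, so that $Ah\in\mathcal H_1$, and then noting by maximality that $A^*\mathcal H_1 \subseteq \mathcal H_1$ follows by applying everything to $(A^*,B^*,P^*)$ whose unitary part of $P^*$ is the same $\mathcal H_1$. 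Once $\mathcal H_1, \mathcal H_2$ reduce $A,B,P$, the triple on $\mathcal H_1$ has $P|_{\mathcal H_1}$ unitary and is an $\mathbb E$-contraction, hence an $\mathbb E$-unitary by Theorem \ref{thm:tu}; the triple on $\mathcal H_2$ is an $\mathbb E$-contraction with $P|_{\mathcal H_2}$ completely non-unitary by construction, which is the definition of a completely non-unitary $\mathbb E$-contraction. Uniqueness and the possibility of trivial summands are inherited directly from the classical canonical decomposition of $P$.
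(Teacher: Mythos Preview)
Your overall plan---start from the canonical decomposition of $P$, then force $\mathcal H_1$ to reduce $A$ and $B$ using the positivity of $\rho_1,\rho_2$ under rotation---is exactly the paper's strategy. But your execution has a real gap at the step where you claim that ``unpacking the equality case of $\rho_1\ge 0$ \dots\ yields $Ah=B^*Ph$.'' Testing the operator inequality on a single vector $h\in\mathcal H_1$ produces only the \emph{scalar} relations
\[
\|Ah\|^2-\|Bh\|^2\ \ge\ 2\,\mathrm{Re}\big(\omega\,\langle (A-B^*P)h,h\rangle\big),\qquad
\|Bh\|^2-\|Ah\|^2\ \ge\ 2\,\mathrm{Re}\big(\beta\,\langle (B-A^*P)h,h\rangle\big),
\]
and summing/rotating forces only $\langle(A-B^*P)h,h\rangle=0$ and $\langle(B-A^*P)h,h\rangle=0$. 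That is far weaker than the vector identity $Ah=B^*Ph$; the component of $Ah$ in $\mathcal H_2$ (namely $A_{21}h$) is invisible to these inner products. Your parenthetical ``which as a quadratic form \dots\ is a sum of squares'' does not supply the missing mechanism, and your own subsequent remarks (``the bookkeeping here is exactly the main obstacle'') confirm that the argument does not close.

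The paper fills this gap by working at the operator level rather than vectorwise. Writing $A,B,P$ as $2\times2$ block matrices with respect to $\mathcal H_1\oplus\mathcal H_2$, the sum of the two rotated positivity conditions becomes a positive block operator
$\left[\begin{smallmatrix}R&X\\ X^*&Q\end{smallmatrix}\right]\ge 0$
whose $(1,1)$ corner $R$ is shown to vanish by varying $\omega,\beta$. The crucial device you are missing is then the standard factorization fact (Proposition~1.3.2 in \cite{bhatia}): $R=0$ forces the off-diagonal block $X=0$. It is $X=0$ that yields the cross relations $A_{21}^*=P_1^*B_{12}$, $B_{21}^*=P_1^*A_{12}$, etc., and these together with the commutation relations $A_{21}P_1=P_2A_{21}$, $B_{12}P_2=P_1B_{12}$ show that $P_2$ would act unitarily on the range of $B_{12}^*$, hence $B_{12}=A_{21}=A_{12}=B_{21}=0$. (An alternative, and arguably shorter, fix for your approach: invoke the fundamental equation $A-B^*P=D_PF_1D_P$ directly; since $D_Ph=0$ for $h\in\mathcal H_1$, this \emph{does} give $Ah=B^*Ph$ immediately, after which the paper's intertwining argument with $P_1,P_2$ finishes the job.)
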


\begin{proof}
It is obvious that if $P$ is a completely non-unitary contraction
then $\mathcal H_1=\{0\}$ and if $P$ is a unitary then $\mathcal
H=\mathcal H_1$ and so $\mathcal H_2=\{0\}$. In such cases the
theorem is trivial. So let us suppose that $P$ is neither a
unitary nor a completely non unitary contraction. Let
\[
A=
\begin{bmatrix}
A_{11}&A_{12}\\
A_{21}&A_{22}
\end{bmatrix}\,,\,
B=
\begin{bmatrix}
B_{11}&B_{12}\\
B_{21}&B_{22}
\end{bmatrix}
\text{ and } P=
\begin{bmatrix}
P_1&0\\
0&P_2
\end{bmatrix}
\]
with respect to the decomposition $\mathcal H=\mathcal H_1\oplus
\mathcal H_2$, so that $P_1$ is a unitary and $P_2$ is completely
non-unitary. Since $P_2$ is completely non-unitary it follows that
if $x\in\mathcal H$ and
\[
\|P_2^nx\|=\|x\|=\|{P_2^*}^nx\|, \quad n=1,2,\hdots
\]
then $x=0$.

The fact that $A$ and $P$ commute tells us that

\begin{align}
A_{11}P_1&=P_1A_{11}    & A_{12}P_2=P_1A_{12}\,, \label{eqn:1} \\
A_{21}P_1&=P_2A_{21}    & A_{22}P_2=P_2A_{22}\,. \label{eqn:2}
\end{align}
Also by commutativity of $B$ and $P$ we have

\begin{align}
B_{11}P_1&=P_1B_{11}    & B_{12}P_2=P_1B_{12}\,, \label{eqn:3} \\
B_{21}P_1&=P_2B_{21}    & B_{22}P_2=P_2B_{22}\,. \label{eqn:4}
\end{align}
By Lemma \ref{lem:3}, we have for all $\omega, \beta\in\mathbb T$,
\begin{align*}
\rho_1(\omega A,\omega B,\omega^2 P)&=(I-P^*P)+(A^*A-B^*B)-2\text{
Re }\omega(A-B^*P)\geq 0 \,,\\
\rho_2(\beta A,\beta B, \beta^2 P)&=(I-P^*P)+(B^*B-A^*A)-2\text{
Re }\beta(B-A^*P) \geq 0 \,.
\end{align*}
Adding $\rho_1$ and $\rho_2$ we get
\[
(I-P^*P)-\text{Re }\omega(A-B^*P)-\text{Re } \beta(B-A^*P)\geq 0
\]
that is
\begin{align}\label{eqn:5}
\begin{bmatrix}
0&0\\
0&I-P_2^*P_2
\end{bmatrix}
-& \text{ Re }\omega
\begin{bmatrix}
A_{11}-B_{11}^*P_1&A_{12}-B_{21}^*P_2\\
A_{21}-B_{12}^*P_1&A_{22}-B_{22}^*P_2
\end{bmatrix} \\
-&\text{ Re }\beta
\begin{bmatrix}
B_{11}-A_{11}^*P_1&B_{12}-A_{21}^*P_2\\
B_{21}-A_{12}^*P_1&B_{22}-A_{22}^*P_2
\end{bmatrix}\, \geq 0 \notag
\end{align}
for all $\omega,\beta\in\mathbb T$. Since the matrix in the left
hand side of (\ref{eqn:5}) is self-adjoint, if we write
(\ref{eqn:5}) as

\begin{equation}\label{eqn:6}
\begin{bmatrix}
R&X\\
X^*&Q
\end{bmatrix}
\geq 0\,,
\end{equation}
then

\begin{eqnarray*}\begin{cases}
&(\mbox{i})\; R\,, Q \geq 0 \text{ and } R=-\text{ Re }\omega (
A_{11}-B_{11}^*P_1) -\text{ Re }\beta (B_{11}-A_{11}^*P_1)\\&
(\mbox{ii}) X= -\frac{1}{2} \{ \omega (
A_{12}-B_{21}^*P_2)+\bar{\omega}(A_{21}^*-P_1^*B_{12})\\& \quad
\quad \quad \quad + \beta
(B_{12}-A_{21}^*P_2)+\bar{\beta}(B_{21}^*-P_1^*A_{12}) \}
\\&(\mbox{iii})\; Q=(I-P_2^*P_2)-\text{ Re }\omega (A_{22}-B_{22}^*P_2) -
\text{ Re }\beta (B_{22}-A_{22}^*P_2) \;.
\end{cases}
\end{eqnarray*}

Since the left hand side of (\ref{eqn:6}) is a positive
semi-definite matrix for every $\omega$ and $\beta$, if we choose
$\beta=1$ and $\beta=-1$ respectively then consideration of the
$(1,1)$ block reveals that
\[
\omega(A_{11}-B_{11}^*P_1)+\bar{\omega}(A_{11}^*-P_1^*B_{11})\leq
0
\]
for all $\omega\in\mathbb T$. Choosing $\omega =\pm 1$ we get
\begin{equation}\label{eqn:7}
(A_{11}-B_{11}^*P_1)+(A_{11}^*-P_1^*B_{11})=0
\end{equation}
and choosing $\omega =\pm i$ we get
\begin{equation}\label{eqn:8}
(A_{11}-B_{11}^*P_1)-(A_{11}^*-P_1^*B_{11})=0\,.
\end{equation}
Therefore, from (\ref{eqn:7}) and (\ref{eqn:8}) we get
\[
A_{11}=B_{11}^*P_1\,,
\]
where $P_1$ is unitary. Similarly, we can show that
\[
B_{11}=A_{11}^*P_1\,.
\]
Therefore, $R=0$. Since $(A,B,P)$ is an $\mathbb E$-contraction,
$\|B\|\leq 1$ and hence $\|B_{11}\|\leq 1$ also. Therefore, by
part-(3) of Theorem \ref{thm:tu},
$(A_{11},B_{11},P_1)$ is an $\mathbb E$-unitary.\\

Now we apply Proposition 1.3.2 of \cite{bhatia} to the positive
semi-definite matrix in the left hand side of (\ref{eqn:6}). This
Proposition states that if $R,Q \geq 0$ then $\begin{bmatrix} R&X
\\ X^*&Q
\end{bmatrix} \geq 0$ if and only if $X=R^{1/2}KQ^{1/2}$ for
some contraction $K$.\\

\noindent Since $R=0$, we have $X=0$. Therefore,
\[
\omega ( A_{12}-B_{21}^*P_2)+\bar{\omega}(A_{21}^*-P_1^*B_{12})+
\beta (B_{12}-A_{21}^*P_2)+\bar{\beta}(B_{21}^*-P_1^*A_{12}) =0\;,
\]
for all $\omega,\beta \in\mathbb T$. Choosing $\beta =\pm 1$ we
get
\[
\omega (
A_{12}-B_{21}^*P_2)+\bar{\omega}(A_{21}^*-P_1^*B_{12})=0\;,
\]
for all $\omega \in \mathbb T$. With the choices $\omega=1,i$ ,
this gives
\[
A_{12}=B_{21}^*P_2\,.
\]
Therefore, we also have
\[
A_{21}^*=P_1^*B_{12}\,.
\]
Similarly, we can prove that
\[
B_{12}=A_{21}^*P_2\,,\quad B_{21}^*=P_1^*A_{12}\,.
\]
Thus, we have the following equations
\begin{align}
A_{12}&=B_{21}^*P_2         & A_{21}^*&=P_1^*B_{12} \label{eqn:9}\\
B_{12}&=A_{21}^*P_2         & B_{21}^*&=P_1^*A_{12}\,.
\label{eqn:10}
\end{align}
Thus from (\ref{eqn:9}), $A_{21}=B_{12}^*P_1$ and together with
the first equation in (\ref{eqn:2}), this implies that
\[
B_{12}^*P_1^2=A_{21}P_1=P_2A_{21}=P_2B_{12}^*P_1
\]
and hence
\begin{equation}\label{eqn:11}
B_{12}^*P_1=P_2B_{12}^*\,.
\end{equation}
From equations in (\ref{eqn:3}) and (\ref{eqn:11}) we have that
\[
B_{12}P_2=P_1B_{12}\,, \quad B_{12}{P_2^*}={P_1^*}B_{12}.
\]
Thus
\begin{align*}
B_{12}P_2{P_2^*} &=P_1B_{12}{P_2^*} =P_1{P_1^*}B_{12}
=B_{12}\,, \\
B_{12}{P_2^*}P_2 &= {P_1^*}B_{12}P_2 ={P_1^*}P_1B_{12}=B_{12}\,,
\end{align*}
and so we have
\[
P_2{P_2^*}B_{12}^*=B_{12}^*={P_2^*}P_2B_{12}^*\,.
\]
This shows that $P_2$ is unitary on the range of $B_{12}^*$ which
can never happen because $P_2$ is completely non-unitary.
Therefore, we must have $B_{12}^*=0$ and so $B_{12}=0$. Similarly
we can prove that $A_{12}=0$. Also from (\ref{eqn:9}), $A_{21}=0$
and from (\ref{eqn:10}), $B_{21}=0$. Thus with respect to the
decomposition $\mathcal H=\mathcal H_1\oplus \mathcal H_2$
\[
A=
\begin{bmatrix}
A_{11}&0\\
0&A_{22}
\end{bmatrix}\,, \quad
B=
\begin{bmatrix}
B_{11}&0\\
0&B_{22}
\end{bmatrix}.
\]
So, $\mathcal H_1$ and $\mathcal H_2$ reduce $A$ and $B$. Also
$(A_{22},B_{22},P_2)$, being the restriction of the $\mathbb
E$-contraction $(A,B,P)$ to the reducing subspace $\mathcal H_2$,
is an $\mathbb E$-contraction. Since $P_2$ is completely
non-unitary, $(A_{22},B_{22},P_2)$ is a completely non-unitary
$\mathbb E$-contraction.

\end{proof}

\section{Operator model}

Wold decomposition breaks an isometry into two parts namely a
unitary and a pure isometry (see Section-I, Ch-1, \cite{nagy}). We
have in Theorem \ref{thm:ti} an analogous decomposition for an
$\mathbb E$-isometry by which an $\mathbb E$-isometry splits into
two parts of which one is an $\mathbb E$-unitary and the other is
a pure $\mathbb E$-isometry. The following theorem gives a
concrete model for pure $\mathbb E$-isometries. Before going to
the theorem, we recall the definition of Toeplitz operator with
operator-valued kernel.\\

For a Hilbert space $E$ let $L^2(E)$ be the space of all
$E$-valued square integrable functions on $\mathbb T$ and let
$H^2(E)$ be the space of analytic elements in $L^2(E)$. Also let
$L^{\infty}(\mathcal B(E))$ denote the space of $\mathcal
B(E)$-valued functions on $\mathbb T$ with finite supremum norm.
For $\phi \in L^{\infty}(\mathcal B(E))$, the Toeplitz operator
$T_{\phi}$ with operator-valued symbol $\phi$ is defined by

\begin{gather*}
T_{\phi}\,:\, H^2(E) \rightarrow H^2(E) \\
T_{\phi}(f)=P(\phi f)
\end{gather*}
where $f\in H^2(E)$ and $P$ is the projection of $L^2(E)$ onto
$H^2(E)$.

\begin{thm}\label{model1}
Let $(\hat{T_1},\hat{T_2},\hat{T_3})$ be a pure $\mathbb
E$-isometry acting on a Hilbert space $\mathcal H$ and let
$A_1,A_2$ denote the fundamental operators of the adjoint
$(\hat{T_1}^*,\hat{T_2}^*,\hat{T_3}^*)$. Then there exists a
unitary $U:\mathcal H \rightarrow H^2(\mathcal D_{{\hat{T_3}}^*})$
such that
\[
\hat{T_1}=U^*T_{\varphi}U,\quad \hat{T_2}=U^*T_{\psi}U \textup{
and } \hat{T_3}=U^*T_zU,
\]
where $\varphi(z)= G_1^*+G_2z,\,\psi(z)= G_2^*+G_1z, \quad
z\in\mathbb T$ and $G_1\,,G_2$ are restrictions of $UA_1U^*$ and
$UA_2U^*$ to the defect space $\mathcal D_{\hat{T_3^*}}$.
Moreover, $A_1,A_2$ satisfy
\begin{enumerate}
\item $[A_1,A_2]=0\,;$ \item $[A_1^*,A_1]=[A_2^*,A_2] \,;$ and
\item $\|A_1^*+A_2z\|\leq 1$ for all $z\in {\mathbb D}$.
\end{enumerate}
Conversely, if $A_1$ and $A_2$ are two bounded operators on a
Hilbert space $E$ satisfying the above three conditions, then
$(T_{A_1^*+A_2z},T_{A_2^*+A_1z},T_z)$ on $H^2(E)$ is a pure
$\mathbb E$-isometry.
\end{thm}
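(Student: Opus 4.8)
The plan is to prove Theorem~\ref{model1} in two halves: the forward direction, where one constructs the unitary $U$ from a given pure $\mathbb E$-isometry and verifies that $A_1,A_2$ satisfy the three conditions; and the converse, where one shows that the explicit triple of Toeplitz operators is a pure $\mathbb E$-isometry.

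\textbf{Forward direction.} Since $(\hat T_1,\hat T_2,\hat T_3)$ is a pure $\mathbb E$-isometry, by Theorem~\ref{thm:ti} the operator $\hat T_3$ is a pure isometry. By the classical Wold decomposition, a pure isometry is unitarily equivalent to the unilateral shift $T_z$ on $H^2(\mathcal D_{\hat T_3^*})$, where $\mathcal D_{\hat T_3^*}=\overline{\operatorname{Ran}}(I-\hat T_3\hat T_3^*)^{1/2}$ serves as the wandering subspace; this gives the unitary $U:\mathcal H\to H^2(\mathcal D_{\hat T_3^*})$ with $U\hat T_3 U^*=T_z$. The task is then to identify $U\hat T_1 U^*$ and $U\hat T_2 U^*$ as Toeplitz operators. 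The key structural input is part~(3) of Theorem~\ref{thm:ti}: $\hat T_1=\hat T_2^*\hat T_3$ and $\hat T_2$ is a contraction commuting with $\hat T_3$. An operator commuting with $T_z$ on $H^2(E)$ is an analytic Toeplitz operator $T_\Theta$ with $\Theta\in H^\infty(\mathcal B(E))$; applying this to $U\hat T_2 U^*$ and then using $\hat T_1=\hat T_2^*\hat T_3$ together with the fundamental equations (\ref{eqn:funda}) for $(\hat T_1^*,\hat T_2^*,\hat T_3^*)$ should force $\Theta(z)=G_2^*+G_1 z$ to be linear, with the coefficients being precisely the compressions of $UA_1U^*, UA_2U^*$ to the constants $\mathcal D_{\hat T_3^*}\subset H^2(\mathcal D_{\hat T_3^*})$. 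To see why $A_1,A_2$ satisfy (1)--(3): since $(\hat T_1^*,\hat T_2^*,\hat T_3^*)$ is an $\mathbb E$-co-isometry one can dilate or directly use that $(\hat T_1,\hat T_2,\hat T_3)$ is itself an $\mathbb E$-isometry to which the known dilation theory of \cite{tirtha} applies; the commutativity $[A_1,A_2]=0$ and $[A_1^*,A_1]=[A_2^*,A_2]$ are exactly the conditions appearing in the $\mathbb E$-isometric dilation theorem, and condition (3), $\|A_1^*+A_2 z\|\le 1$ for $z\in\mathbb D$, translates the fact that $\psi$ (equivalently $\varphi$) is the symbol of a contractive Toeplitz operator, hence $\|\psi\|_{\infty,\mathbb T}\le 1$.

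\textbf{Converse direction.} Given $A_1,A_2$ on $E$ satisfying (1)--(3), set $\varphi(z)=A_1^*+A_2 z$, $\psi(z)=A_2^*+A_1 z$ and consider $(T_\varphi,T_\psi,T_z)$ on $H^2(E)$. First verify commutativity: $T_\varphi T_z=T_z T_\varphi$ and $T_\psi T_z=T_z T_\psi$ hold because $\varphi,\psi$ are analytic; $T_\varphi T_\psi=T_\psi T_\varphi$ requires $\varphi\psi=\psi\varphi$ pointwise on $\mathbb T$, which expands to $(A_1^*+A_2 z)(A_2^*+A_1 z)=(A_2^*+A_1 z)(A_1^*+A_2 z)$, and comparing coefficients of $1,z,z^2$ this reduces exactly to $[A_1,A_2]=0$ together with the Hermitian identity $A_1^*A_2^*+A_2 A_1=A_2^*A_1^*+A_1 A_2$, which is equivalent to $[A_1^*,A_1]=[A_2^*,A_2]$ once one uses $[A_1,A_2]=0$. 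Next, $T_z$ is a pure isometry on $H^2(E)$. Then by Theorem~\ref{thm:ti}(3) it suffices to check that $T_\psi$ is a contraction and $T_\varphi=T_\psi^* T_z$. The first is immediate from condition (3): $\|T_\psi\|=\|\psi\|_{\infty,\mathbb T}=\sup_{|z|=1}\|A_2^*+A_1 z\|\le 1$ (the last inequality follows from (3) by the maximum principle / a rotation argument as in Lemma~\ref{lem:2}). For the identity $T_\varphi=T_\psi^* T_z$: compute $T_\psi^* T_z$ as a Toeplitz operator with symbol $\bar z\,\overline{\psi(z)}^{\,*}$-type expression — more carefully, $T_\psi^*=T_{\psi^*}$ where $\psi^*(z)=A_2+A_1^*\bar z$, so $T_\psi^* T_z$ has symbol the analytic part of $(A_2+A_1^*\bar z)z=A_1^*+A_2 z=\varphi(z)$; since the $\bar z\cdot z$ cross terms land in the right place this gives $T_\psi^* T_z=T_\varphi$ exactly. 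With all conditions of Theorem~\ref{thm:ti}(3) met, $(T_\varphi,T_\psi,T_z)$ is an $\mathbb E$-isometry, and it is pure because $T_z$ is a pure isometry, completing the converse.

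\textbf{Main obstacle.} The routine parts are the commutativity bookkeeping and the symbol computation $T_\psi^* T_z=T_\varphi$; these are direct once set up. The subtler point in the forward direction is the precise identification that the analytic symbol $\Theta$ of $U\hat T_2 U^*$ is \emph{linear} in $z$ with coefficients given by the fundamental operators $A_1,A_2$ — this is where the fundamental equations (\ref{eqn:funda}) and the pure $\mathbb E$-isometry structure must be combined carefully, computing $\Theta$'s Taylor coefficients against the wandering subspace and showing all coefficients beyond degree one vanish. Verifying $\|A_1^*+A_2 z\|\le 1$ on $\mathbb D$ (rather than just on $\mathbb T$) in the forward direction also needs the observation that $\varphi$ extends to a bounded analytic function and its norm is attained on the boundary, so one deduces the disc estimate from the contractivity of $T_\varphi$.
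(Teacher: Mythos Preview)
The paper does not prove this theorem in the text: it simply cites Theorem~3.3 of \cite{sourav1}. Consequently there is no in-paper argument to compare against, and your proposal is in fact supplying a self-contained proof where the paper only gives a reference. Your overall strategy is the natural (and standard) one and is essentially correct; below are a few places where the write-up should be tightened.

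\textbf{Linearity of the symbol (your ``main obstacle'').} You are right that this is the crux, but it is cleaner than you suggest. Once you know that $U\hat T_1U^*=T_\varphi$ and $U\hat T_2U^*=T_\psi$ are \emph{analytic} Toeplitz operators (from commutation with $T_z$) and that, by Theorem~\ref{thm:ti}(3) applied in both orders, $T_\varphi=T_\psi^*T_z$ and $T_\psi=T_\varphi^*T_z$, a direct block-matrix comparison (or reading off Fourier coefficients) forces $\varphi_n=\psi_n=0$ for $n\ge 2$ and yields $\varphi_0=\psi_1^*$, $\varphi_1=\psi_0^*$. The fundamental equations for $(\hat T_1^*,\hat T_2^*,\hat T_3^*)$ are then only needed to identify the two surviving coefficients with the compressions of $UA_1U^*$, $UA_2U^*$ to the constants, not to produce the linearity itself.

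\textbf{A coefficient slip in the converse.} In checking $T_\varphi T_\psi=T_\psi T_\varphi$, the coefficient of $z$ in $\varphi\psi-\psi\varphi$ is
\[
(A_1^*A_1+A_2A_2^*)-(A_2^*A_2+A_1A_1^*)=[A_1^*,A_1]-[A_2^*,A_2],
\]
which is \emph{directly} condition~(2); it is not the identity $A_1^*A_2^*+A_2A_1=A_2^*A_1^*+A_1A_2$ you wrote (that expression mixes the constant and $z^2$ terms, both of which already encode $[A_1,A_2]=0$), and no appeal to $[A_1,A_2]=0$ is needed to extract condition~(2).

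\textbf{The identity $T_\psi^*T_z=T_\varphi$.} Your computation is correct but the justification should be stated precisely: since $z$ is analytic, one has $T_{\psi^*}T_z=T_{\psi^*\cdot z}$ (the product rule for Toeplitz operators when the right factor has analytic symbol), and $\psi(z)^*\,z=(A_2+A_1^*\bar z)z=A_1^*+A_2z=\varphi(z)$. The phrase ``analytic part of'' is misleading here; no truncation occurs.

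\textbf{Condition~(3) in the forward direction.} You derive $\|\varphi\|_{\infty,\mathbb T}\le 1$ from $\|T_\varphi\|=\|\hat T_1\|\le 1$; since $\varphi$ is a polynomial the bound on $\overline{\mathbb D}$ follows from the maximum principle, so the passage from $\mathbb T$ to $\mathbb D$ is immediate and need not be listed as a separate obstacle.

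With these adjustments your argument is complete and matches what one would expect the cited proof in \cite{sourav1} to contain.
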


See Theorem 3.3 in \cite{sourav1} for a proof to this theorem. The
following dilation theorem was proved in \cite{tirtha} and for a
proof one can see Theorem 6.1 in \cite{tirtha}.

\begin{thm}\label{thm:dilation}

Let $(A, B, P)$ be a tetrablock contraction on $\mathcal H$ with
fundamental operators $F_1$ and $F_2$ . Let $\mathcal D_P$ be the
closure of the range of $D_P$. Let $\mathcal K = \mathcal H \oplus
\mathcal D_P \oplus \mathcal D_P \oplus \cdots = \mathcal H \oplus
l^2(\mathcal D_P) $. Consider the operators $V_1, V_2$ and $V_3$
defined on $\mathcal{K}$ by
\begin{align*} &
V_1(h_0,h_1,h_2,\dots)=(Ah_0,F_2^* D_P h_0 + F_1 h_1 , F_2^*h_1 + F_1 h_2 , F_2^*h_2 + F_1 h_3,\dots)\\
& V_2(h_0,h_1,h_2,\dots)=(Bh_0 , F_1^* D_P h_0 + F_2 h_1 , F_1^*h_1 + F_2 h_2 , F_1^*h_2 + F_2 h_3,\dots)\\
& V_3(h_0,h_1,h_2,\dots)=(Ph_0, D_P h_0,h_1,h_2,\dots).
\end{align*}
Then \begin{enumerate} \item $\underline V = (V_1,V_2,V_3)$ is a
minimal tetrablock isometric
    dilation of $(A, B, P)$ if $[F_1 , F_2] = 0$ and $[F_1 , F_1^* ] = [F_2 , F_2^* ]$.
\item If there is a tetrablock isometric dilation $\underline W =
    (W_1,W_2,W_3)$ of $(A, B, P)$ such that $W_3$ is the minimal isometric dilation of $P$,
    then $\underline W$ is unitarily equivalent to $\underline V$. Moreover, $[F_1, F_2] = 0$ and $[F_1 , F_1^* ]
    = [F_2 , F_2^* ]$.
\end{enumerate}

\end{thm}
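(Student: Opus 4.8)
The plan is to verify that $\underline V=(V_1,V_2,V_3)$ has the three properties listed in part~(3) of Theorem~\ref{thm:ti} --- $V_3$ an isometry, $V_2$ a contraction, and $V_1=V_2^*V_3$ --- and then to read the dilation and minimality off the block form of the $V_i$ relative to $\mathcal K=\mathcal H\op l^2(\mathcal D_P)$. The computation rests on two identities valid for \emph{every} $\mathbb E$-contraction $(A,B,P)$:
\[
D_P A=F_1 D_P+F_2^*D_P P,\qquad D_P B=F_2 D_P+F_1^*D_P P.
\]
Each is obtained by pairing both sides with $D_P h'$, expanding $D_P^2=I-P^*P$, and using $D_P F_1 D_P=A-B^*P$, $D_P F_2^*D_P=B^*-P^*A$ together with $AP=PA$; this is a one-line check.

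Now $V_3$ is exactly the Sch\"affer form of the minimal isometric dilation of $P$, so it is an isometry and $\mathcal K=\bigvee_{n\ge 0}V_3^n\mathcal H$. Reading off the defining formulas, $\mathcal H^\perp=l^2(\mathcal D_P)$ is invariant under each $V_i$, while $P_{\mathcal H}V_1|_{\mathcal H}=A$, $P_{\mathcal H}V_2|_{\mathcal H}=B$, $P_{\mathcal H}V_3|_{\mathcal H}=P$; hence once $\underline V$ is known to be an $\mathbb E$-isometry, the usual co-extension argument gives $f(A,B,P)=P_{\mathcal H}f(V_1,V_2,V_3)|_{\mathcal H}$ for all polynomials $f$, and minimality is immediate. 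Computing the adjoint $V_2^*(g_0,g_1,g_2,\dots)=(B^*g_0+D_P F_1 g_1,\ F_2^*g_1+F_1 g_2,\ F_2^*g_2+F_1 g_3,\dots)$ and composing with $V_3$, the fundamental equation $A-B^*P=D_P F_1 D_P$ collapses the first coordinate to $Ah_0$ and gives $V_1=V_2^*V_3$ on the nose. The commutation relations are where the hypotheses are used: $V_1V_3=V_3V_1$ and $V_2V_3=V_3V_2$ reduce to the two displayed identities, while a comparison of the entries of $V_1V_2$ and $V_2V_1$ reduces --- after those identities are used to rewrite the $D_P A$- and $D_P B$-terms --- to the operator equations $F_1F_2=F_2F_1$ and $F_2^*F_2+F_1F_1^*=F_1^*F_1+F_2F_2^*$, i.e.\ to $[F_1,F_2]=0$ and $[F_1,F_1^*]=[F_2,F_2^*]$.

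The one substantial step is that $V_2$ is a contraction. Since $\mathcal H^\perp$ is $V_2$-invariant, $V_2$ is block lower-triangular, with $(1,1)$-entry $B$, first column $C h_0=(F_1^*D_P h_0,0,0,\dots)$, and $(2,2)$-entry the analytic Toeplitz operator $T_\varphi$ with symbol $\varphi(z)=F_2+F_1^*z$; hence
\[
I-V_2^*V_2=\begin{bmatrix} I-B^*B-D_P F_1 F_1^* D_P & -C^*T_\varphi\\ -T_\varphi^* C & I-T_\varphi^* T_\varphi\end{bmatrix}.
\]
By the Schur-complement criterion (Proposition 1.3.2 of \cite{bhatia}, already invoked in the proof of Theorem~\ref{thm:decomp}), positivity of this matrix amounts to: (i) $I-T_\varphi^*T_\varphi\ge 0$, equivalently $\|F_2+F_1^*z\|\le 1$ for all $z\in\overline{\mathbb D}$; (ii) $I-B^*B\ge D_P F_1 F_1^* D_P$; and (iii) a factorization of $-C^*T_\varphi$ through the square roots of the two diagonal blocks. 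Both (i) and (ii) should come from the positivity of $\rho_1,\rho_2$ on the torus-twists (Lemma~\ref{lem:3}), compressed to $\mathcal D_P$, together with the fundamental equations and the standing commutation hypotheses; in particular the commutation hypotheses are what upgrade the automatic numerical-radius bound $|\langle F_1\xi,\xi\rangle|+|\langle F_2\xi,\xi\rangle|\le\|\xi\|^2$ to the operator-norm bound in~(i). I expect~(i) --- extracting an \emph{operator-norm} bound, not merely a numerical-radius bound, for the fundamental operators under the commutation hypotheses --- to be the real obstacle; (iii) is then routine bookkeeping. Granting (i)--(iii), Theorem~\ref{thm:ti}(3) makes $\underline V$ an $\mathbb E$-isometry, and with the preceding paragraph this proves part~(1).

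For part~(2), let $\underline W=(W_1,W_2,W_3)$ be an $\mathbb E$-isometric dilation of $(A,B,P)$ with $W_3$ the minimal isometric dilation of $P$. By uniqueness of the minimal isometric dilation we may take $\mathcal K=\mathcal H\op l^2(\mathcal D_P)$ and $W_3=V_3$, with $\mathcal H$ co-invariant for each $W_i$. Then $W_2=\begin{bmatrix} B & 0\\ C & D\end{bmatrix}$ --- the $(1,1)$-entry being $B$ by the co-extension property, the $(1,2)$-entry vanishing by co-invariance --- and, $\underline W$ being an $\mathbb E$-isometry with $W_3$ an isometry, Theorem~\ref{thm:ti}(3) gives $W_1=W_2^*W_3$ and $W_2$ a contraction. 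The commutation $W_2V_3=V_3W_2$ forces $D$ to commute with the shift, hence $D=T_\Theta$ for an analytic $\mathcal B(\mathcal D_P)$-valued symbol with $\|\Theta\|_\infty\le 1$, and imposes level-by-level relations on the columns of $C$; from $W_1=W_2^*W_3$ together with $P_{\mathcal H}W_1|_{\mathcal H}=A$ and $A-B^*P=D_P F_1 D_P$ the top component of $C$ is forced to be $F_1^*D_P$; and the commutation $W_1V_3=V_3W_1$ forces the remaining components of $C$ to vanish, which, fed back through $D_P B=F_2 D_P+F_1^*D_P P$, pins down $\Theta(z)=F_2+F_1^*z$. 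Hence $W_2=V_2$ and $W_1=V_1$, so $\underline W$ coincides with $\underline V$; and since $W_1,W_2$ commute, the computation of the second paragraph read in reverse yields $[F_1,F_2]=0$ and $[F_1,F_1^*]=[F_2,F_2^*]$, completing the proof.
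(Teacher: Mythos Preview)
The paper does not supply its own proof of this theorem: immediately before the statement it says ``The following dilation theorem was proved in \cite{tirtha} and for a proof one can see Theorem 6.1 in \cite{tirtha}.'' So there is no in-paper argument to compare against; your proposal is being measured against a result the author imports wholesale.

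That said, your write-up is a sketch, not a proof, and you flag the gap yourself. The step ``$V_2$ is a contraction'' is the crux of part~(1), and you reduce it to three items (i)--(iii) but then write ``I expect~(i) \dots\ to be the real obstacle'' and proceed by ``Granting (i)--(iii)''. Item~(i) is the assertion $\|F_2+F_1^*z\|\le 1$ for all $z\in\overline{\mathbb D}$. What is automatic for \emph{every} $\mathbb E$-contraction is only the numerical-radius bound $w(F_1+zF_2)\le 1$; upgrading this to an operator-norm bound is exactly where the hypotheses $[F_1,F_2]=0$ and $[F_1,F_1^*]=[F_2,F_2^*]$ must do work, and you have not carried that out. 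Nor have you verified~(ii) or the factorization~(iii); ``should come from'' and ``routine bookkeeping'' are not arguments. Until those three items are actually established, part~(1) is unproved.

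Your part~(2) is closer to complete but still telegraphic at the decisive points: the claims that the commutation relations force $D=T_\Theta$ with $\Theta(z)=F_2+F_1^*z$, that the lower components of $C$ vanish, and that the first component equals $F_1^*D_P$, each require a genuine computation rather than the phrases ``forces'' and ``pins down''. The overall strategy---identify $W_3$ with the Sch\"affer isometry, use co-invariance to get a block lower-triangular form, and let the commutation with $V_3$ and the relation $W_1=W_2^*W_3$ determine the off-diagonal and Toeplitz pieces---is the right one and is essentially how the argument in \cite{tirtha} runs, but as written your proposal defers the substantive verifications in both parts.
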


The following result of one variable dilation theory is necessary
for the proof of the model theorem for $\mathbb E$-contractions
and since the result is well-known we do not give a proof here.

\begin{prop}\label{easyprop1}
If $P$ is a contraction and $W$ is its minimal isometric dilation
then $P^*$ and $W^*$ have defect spaces of same dimension.
\end{prop}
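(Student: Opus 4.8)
The plan is to prove Proposition \ref{easyprop1} via the Sz.-Nagy--Foias construction of the minimal isometric dilation. Recall that if $P$ is a contraction on $\mathcal H$ with defect operator $D_P=(I-P^*P)^{1/2}$ and defect space $\mathcal D_P=\overline{\operatorname{Ran}}\,D_P$, then the minimal isometric dilation $W$ acts on $\mathcal K=\mathcal H\oplus l^2(\mathcal D_P)$ by
\[
W(h_0,h_1,h_2,\dots)=(Ph_0,\,D_Ph_0,\,h_1,\,h_2,\dots).
\]
First I would compute $W^*$ explicitly: a direct calculation gives
\[
W^*(h_0,h_1,h_2,\dots)=(P^*h_0+D_Ph_1,\,h_2,\,h_3,\dots).
\]
Then I would identify $\ker(I-W W^*)$, equivalently the space on which $W$ is a co-isometry, and compare it with the analogous object for $P^*$.

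The key computation is $I-WW^*$. From the formula for $W^*$ one gets, for $(h_0,h_1,h_2,\dots)\in\mathcal K$,
\[
WW^*(h_0,h_1,h_2,\dots)=\bigl(PP^*h_0+PD_Ph_1,\;D_PP^*h_0+D_P^2h_1,\;h_2,\;h_3,\dots\bigr).
\]
Hence $I-WW^*$ vanishes on all coordinates from the second onwards, and on the first two coordinates it is the operator
\[
\begin{bmatrix} I-PP^* & -PD_P \\ -D_PP^* & I-D_P^2\end{bmatrix}
\]
acting on $\mathcal H\oplus\mathcal D_P$. Using $I-D_P^2=P^*P$ on $\mathcal D_P$ (more precisely $D_P^2=I-P^*P$) and the intertwining identity $PD_P=D_{P^*}P$, one checks that this $2\times 2$ block operator is the rank-type operator
\[
\begin{bmatrix} D_{P^*}\\ -D_P P^* \end{bmatrix}\begin{bmatrix} D_{P^*} & -P D_P\end{bmatrix}
\]
wait --- more cleanly: its range is contained in $\{(D_{P^*}x,\,-D_PP^*x):x\in\mathcal H\}^{-}$, and the map $x\mapsto(D_{P^*}x,-D_PP^*x)$ from $\mathcal H$ into $\mathcal H\oplus\mathcal D_P$ has kernel exactly $\ker D_{P^*}\cap\ker(D_PP^*)$. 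Since $\|D_{P^*}x\|^2+\|D_PP^*x\|^2=\|x\|^2-\|P^*x\|^2+\|P^*x\|^2-\|PP^*x\|^2=\|x\|^2-\|PP^*x\|^2$, this kernel is $\{x:\|PP^*x\|=\|x\|\}$, and on that set $D_{P^*}x=0$. Thus $\overline{\operatorname{Ran}}(I-WW^*)$ is unitarily parametrized by $\mathcal H\ominus\ker D_{P^*}=\overline{\operatorname{Ran}}\,D_{P^*}=\mathcal D_{P^*}$, via $x\mapsto(D_{P^*}x,-D_PP^*x)$, which is an isometry up to the correction above --- more carefully one restricts to $\overline{\operatorname{Ran}}\,D_{P^*}$ where the map $D_{P^*}x\mapsto(D_{P^*}x,-D_PP^*x)$ is a well-defined isometry. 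Therefore $\dim\overline{\operatorname{Ran}}(I-WW^*)=\dim\mathcal D_{P^*}$.

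Finally I would observe that $\overline{\operatorname{Ran}}(I-WW^*)$ is precisely the defect space $\mathcal D_{W^*}$ of $W^*$, since $D_{W^*}^2=I-WW^*$ and so $\mathcal D_{W^*}=\overline{\operatorname{Ran}}\,D_{W^*}=\overline{\operatorname{Ran}}(I-WW^*)$. Combining with the previous paragraph gives $\dim\mathcal D_{W^*}=\dim\mathcal D_{P^*}$, which is the assertion. The main obstacle is the bookkeeping in the $2\times2$ block computation of $I-WW^*$ and verifying that the parametrizing map on $\mathcal D_{P^*}$ is genuinely an isometry onto the defect space; this is routine once the intertwining relations $PD_P=D_{P^*}P$ and $D_PP^*=P^*D_{P^*}$ are used, and it is exactly for this reason that the authors, as stated, omit the proof of this well-known fact. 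An alternative, perhaps cleaner, route is to invoke directly the Sz.-Nagy--Foias theory: the minimal isometric dilation $W$ of $P$ has characteristic function equal to that of $P$ (up to the extra Hardy-space summand which is inner), and the defect spaces of $P^*$ and $W^*$ are read off as the domain/range spaces of the characteristic function $\Theta_P$, which are unchanged under passing to the dilation; but the direct computation above is self-contained and elementary, so I would present that.
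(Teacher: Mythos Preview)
Your overall strategy via the explicit Sz.-Nagy model is valid, but the factorization you write down is wrong, and the ``isometry'' you build from it is not one. If the $2\times2$ block of $I-WW^*$ factored as you claim, its $(1,2)$-entry would be $-D_{P^*}PD_P=-PD_P^2$, not the required $-PD_P$. The correct factorization is
\[
\begin{bmatrix}I-PP^*&-PD_P\\-D_PP^*&P^*P|_{\mathcal D_P}\end{bmatrix}
=\begin{bmatrix}D_{P^*}\\-P^*\end{bmatrix}\begin{bmatrix}D_{P^*}&-P\end{bmatrix},
\]
the row being viewed as a map $\mathcal H\oplus\mathcal D_P\to\mathcal D_{P^*}$ (recall $P(\mathcal D_P)\subseteq\mathcal D_{P^*}$ and $P^*(\mathcal D_{P^*})\subseteq\mathcal D_P$ via $PD_P=D_{P^*}P$). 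Then $\overline{\operatorname{Ran}}(I-WW^*)$ is the image of $\mathcal D_{P^*}$ under $y\mapsto(D_{P^*}y,-P^*y,0,\dots)$, and \emph{this} map is a genuine isometry, since $\|D_{P^*}y\|^2+\|P^*y\|^2=\|y\|^2$ for every $y$. By contrast, your proposed map $D_{P^*}x\mapsto(D_{P^*}x,-D_PP^*x)=(D_{P^*}x,-P^*D_{P^*}x)$ sends a vector of norm $\|D_{P^*}x\|$ to one of norm-squared $\|D_{P^*}x\|^2+\|P^*D_{P^*}x\|^2$, which is strictly larger whenever $P^*D_{P^*}x\neq0$; so it is not an isometry and does not give the dimension equality. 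With the corrected column vector your argument goes through and yields $\dim\mathcal D_{W^*}=\dim\mathcal D_{P^*}$.

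For comparison, the paper's own proof avoids the block computation entirely. It uses only minimality and the co-extension property $W^*|_{\mathcal H}=P^*$: from $(I-WW^*)W^n=0$ for $n\ge1$ together with $\mathcal K=\overline{\operatorname{span}}\{W^nh:h\in\mathcal H,\,n\ge0\}$ one obtains $\mathcal D_{W^*}=\overline{\{D_{W^*}h:h\in\mathcal H\}}$, and then $L:D_{P^*}h\mapsto D_{W^*}h$ is a surjective isometry because $\|D_{W^*}h\|^2=\langle(I-WW^*)h,h\rangle=\langle(I-PP^*)h,h\rangle=\|D_{P^*}h\|^2$ for $h\in\mathcal H$. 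That argument is model-free and shorter; your route, once repaired, has the compensating merit of exhibiting $\mathcal D_{W^*}$ concretely as a subspace of $\mathcal H\oplus\mathcal D_P$.
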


The next theorem is the main result of this section and it
provides a model for the $\mathbb E$-contractions which satisfy
some conditions.

\begin{thm}\label{thm:model2}
 Let $(A,B,P)$ be an $\mathbb E$-contraction on a Hilbert space $\mathcal
 H$ and let $F_1,F_2$ and $F_{1*},F_{2*}$ be respectively the
 fundamental operators of $(A,B,P)$ and $(A^*,B^*,P^*)$. Let
 $F_{1*},F_{2*}$ satisfy $[F_{1*},F_{2*}]=0$ and $[F_{1*}^*,F_{1*}]=[F_{2*}^*,F_{2*}]$.
 Let $(T_1,T_2,T_3)$ on $\mathcal K_*=\mathcal H\oplus \mathcal D_{P^*}\oplus\mathcal D_{P^*}\oplus
 \cdots$ be defined as
 \begin{gather*}
 T_1=\begin{bmatrix}
 A&D_{P^*}F_{2*}&0&0&\cdots\\ 0&F_{1*}^*&F_{2*}&0&\cdots\\
 0&0&F_{1*}^*&F_{2*}&\cdots \\ 0&0&0&F_{1*}^*&\cdots\\ \vdots&\vdots&\vdots&\vdots&\ddots
 \end{bmatrix}\,,\quad
 T_2=\begin{bmatrix}
 B&D_{P^*}F_{1*}&0&0&\cdots\\ 0&F_{2*}^*&F_{1*}&0&\cdots\\
 0&0&F_{2*}^*&F_{1*}&\cdots \\ 0&0&0&F_{2*}^*&\cdots\\ \vdots&\vdots&\vdots&\vdots&\ddots
 \end{bmatrix} \,, \\
T_3=\begin{bmatrix} P&D_{P^*}&0&0&\cdots\\0&0&I&0&\cdots\\0&0&0&I&\cdots \\
 0&0&0&0&\cdots\\\vdots&\vdots&\vdots&\vdots&\ddots
 \end{bmatrix}\,.
 \end{gather*}
 Then
 \begin{enumerate}
 \item [(1)] $(T_1,T_2,T_3)$ is an $\mathbb E$-co-isometry, $\mathcal H$ is a
 common invariant subspace of $T_1,T_2,T_3$ and $T_1|_{\mathcal H}=A,\, T_2|_{\mathcal H}=B$ and
 $T_3|_{\mathcal H}=P$; \item [(2)] there is an orthogonal decomposition $\mathcal K_*=\mathcal K_1\oplus \mathcal K_2$
 into reducing subspaces of $T_1,\,T_2$ and $T_3$ such that $(T_1|_{\mathcal K_1},T_2|_{\mathcal K_1},T_3|_{\mathcal K_1})$ is
 an $\mathbb E$-unitary and $(T_1|_{\mathcal K_2},T_2|_{\mathcal K_2},T_3|_{\mathcal K_2})$ is a
 pure $\mathbb E$-co-isometry; \item [(3)] $\mathcal K_2$ can be
 identified with $H^2(\mathcal D_{T_3})$, where $\mathcal D_{T_3}$ has same dimension
 as that of $\mathcal D_P$. The operators $T_1|_{\mathcal K_2},\,T_2|_{\mathcal K_2}$ and $T_3|_{\mathcal
 K_2}$ are respectively unitarily equivalent to $T_{G_1+G_{2}^*\bar z},\,T_{G_{2}+G_{1}^*\bar z}$ and $T_{\bar
 z}$ defined on $H^2(\mathcal D_{T_3})$, $G_1,G_2$ being the fundamental operators of
 $(T_1,T_2,T_3)$.
 \end{enumerate}
 \end{thm}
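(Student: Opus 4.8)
The plan is to verify the three assertions in turn, building the $\mathbb E$-co-isometry $(T_1,T_2,T_3)$ by identifying it as the adjoint of the dilation from Theorem \ref{thm:dilation} applied to $(A^*,B^*,P^*)$. First I would observe that since $F_{1*},F_{2*}$ (the fundamental operators of $(A^*,B^*,P^*)$) satisfy $[F_{1*},F_{2*}]=0$ and $[F_{1*}^*,F_{1*}]=[F_{2*}^*,F_{2*}]$, Theorem \ref{thm:dilation} produces a minimal $\mathbb E$-isometric dilation $(V_1,V_2,V_3)$ of $(A^*,B^*,P^*)$ on $\mathcal H\oplus l^2(\mathcal D_{P^*})$. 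A direct inspection shows that $T_i=V_i^*$ for $i=1,2,3$: the block form of $V_i$ given there, when transposed and conjugated, yields exactly the upper-triangular matrices displayed in the statement (the $(1,1)$ block becomes $A,B,P$; the off-diagonal first row $F_{2*}^*D_{P^*}$, $F_{1*}^*D_{P^*}$, $D_{P^*}$ become the entries $D_{P^*}F_{2*}$, $D_{P^*}F_{1*}$, $D_{P^*}$; and the shift-with-$F_{1*},F_{2*}$ structure becomes the co-shift-with-$F_{1*}^*,F_{2*}^*$ structure). Hence $(T_1,T_2,T_3)$ is an $\mathbb E$-co-isometry, and because $V_3$ dilates $P^*$ with $\mathcal H$ co-invariant, $\mathcal H$ is invariant for $T_1,T_2,T_3$ with $T_i|_{\mathcal H}$ equal to $A,B,P$. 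That proves (1).

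For (2), I would apply the Wold-type decomposition for $\mathbb E$-isometries, Theorem \ref{thm:ti}(4), to $(V_1,V_2,V_3)$: this splits $\mathcal K_*$ into reducing subspaces $\mathcal K_1\oplus\mathcal K_2$ on which $(V_1,V_2,V_3)$ restricts to an $\mathbb E$-unitary and a pure $\mathbb E$-isometry respectively. Since $\mathbb E$-unitaries are self-adjoint as a class (the defining conditions are symmetric in passing to adjoints, by Theorem \ref{thm:tu}), the restriction of $(T_1,T_2,T_3)=(V_1^*,V_2^*,V_3^*)$ to $\mathcal K_1$ is again an $\mathbb E$-unitary; and the restriction to $\mathcal K_2$ is the adjoint of a pure $\mathbb E$-isometry, i.e. a pure $\mathbb E$-co-isometry. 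Here I must be slightly careful about which notion of ``pure'' is meant — purity of $V_3|_{\mathcal K_2}$ as an isometry translates to $V_3^*|_{\mathcal K_2}$ being a pure contraction (in the sense $P^{*n}\to 0$ strongly), which is exactly the definition of pure $\mathbb E$-co-isometry in the Definition block.

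For (3), I would use Theorem \ref{model1} (the model for pure $\mathbb E$-isometries): the pure $\mathbb E$-isometry $(V_1|_{\mathcal K_2},V_2|_{\mathcal K_2},V_3|_{\mathcal K_2})$ is unitarily equivalent to $(T_\varphi,T_\psi,T_z)$ on $H^2(\mathcal D_{(V_3|_{\mathcal K_2})^*})$ with $\varphi(z)=G_1^*+G_2z$, $\psi(z)=G_2^*+G_1z$, where $G_1,G_2$ are the fundamental operators of the adjoint — and the adjoint of $(V_1|_{\mathcal K_2},V_2|_{\mathcal K_2},V_3|_{\mathcal K_2})$ is exactly $(T_1|_{\mathcal K_2},T_2|_{\mathcal K_2},T_3|_{\mathcal K_2})$, whose fundamental operators I will call $G_1,G_2$. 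Taking adjoints of the model, $(T_1|_{\mathcal K_2},T_2|_{\mathcal K_2},T_3|_{\mathcal K_2})$ is unitarily equivalent to $(T_\varphi^*,T_\psi^*,T_z^*)=(T_{\overline\varphi},T_{\overline\psi},T_{\bar z})=(T_{G_1+G_2^*\bar z},T_{G_2+G_1^*\bar z},T_{\bar z})$ on the same Hardy space. It remains to identify the coefficient space: $\mathcal D_{T_3}$ — meaning $\mathcal D_{T_3^*}$ in the notation, or rather the defect space of $T_3|_{\mathcal K_2}$ — has the same dimension as $\mathcal D_P$. This follows because $T_3^*=V_3$ is the minimal isometric dilation of $P^*$ (minimality coming from minimality of the dilation in Theorem \ref{thm:dilation}), so by Proposition \ref{easyprop1} applied to the contraction $P^*$, the defect spaces of $(P^*)^*=P$ and of $V_3^*=T_3$ have the same dimension, i.e. $\dim\mathcal D_{T_3}=\dim\mathcal D_P$.

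The main obstacle I expect is purely bookkeeping rather than conceptual: carefully matching the block matrices to confirm $T_i=V_i^*$, and keeping straight the three layers of adjoints (the hypothesis is on $(A^*,B^*,P^*)$; the dilation theorem produces an $\mathbb E$-isometry whose adjoint is the claimed $\mathbb E$-co-isometry; and the model theorem for pure $\mathbb E$-isometries must itself be dualized). A secondary subtlety is ensuring the fundamental operators $G_1,G_2$ of $(T_1,T_2,T_3)$ are genuinely well-defined — this uses the uniqueness part of the fundamental-operator theorem (Theorem 3.5 of \cite{tirtha}) applied to the $\mathbb E$-contraction $(T_1,T_2,T_3)$ — and that restricting them to the reducing subspace $\mathcal K_2$ gives the fundamental operators of the restriction, which is where the identification of the symbol coefficients with $G_1,G_2$ in part (3) comes from.
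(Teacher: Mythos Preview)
Your proposal is correct and follows essentially the same route as the paper: apply Theorem \ref{thm:dilation} to $(A^*,B^*,P^*)$, identify the resulting $\mathbb E$-isometry as $(T_1^*,T_2^*,T_3^*)$, invoke the Wold decomposition of Theorem \ref{thm:ti} for part (2), then dualize Theorem \ref{model1} and use Proposition \ref{easyprop1} for part (3). The only point where the paper is slightly more explicit than your sketch is the ``secondary subtlety'' you flag at the end: rather than speaking of restricting $G_1,G_2$ to $\mathcal K_2$, the paper writes out the fundamental equations for $(T_1,T_2,T_3)$ in $2\times 2$ block form with respect to $\mathcal K_1\oplus\mathcal K_2$ and observes that, since $T_{13}$ is unitary, $D_{T_3}$ lives entirely in $\mathcal K_2$ and equals $D_{T_{23}}$, so the fundamental operators of $(T_1,T_2,T_3)$ and of its restriction to $\mathcal K_2$ literally coincide.
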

 \begin{proof}
We apply Theorem \ref{thm:dilation} to $(A^*,B^*,P^*)$ to obtain a
minimal $\mathbb E$-isometric dilation for $(A^*,B^*,P^*)$. If we
denote this $\mathbb E$-isometric dilation by
$(V_{1*},V_{2*},V_{3*})$ then it is evident from Theorem
\ref{thm:dilation} that each $V_{i*}$ is defined on $\mathcal
K_*=\mathcal H\oplus \mathcal D_{P^*}\oplus\mathcal D_{P^*}\oplus
\cdots$ and with respect to this decomposition

\begin{gather*}
V_{1*}= \begin{bmatrix} A^*&0&0&0&\dots\\
F_{2*}^* D_{P^*} & F_{1*}& 0 & 0 &\dots \\
0 & F_{2*}^* & F_{1*} & 0 & \dots \\
0 & 0 & F_{2*}^* & F_{1*} & \dots\\
\dots&\dots&\dots&\dots&\dots\\
\end{bmatrix}\,,
V_{2*}= \begin{bmatrix} B^*&0&0&0&\dots\\
F_{1*}^* D_{P^*} & F_{2*}& 0 & 0 &\dots \\
0 & F_{1*}^* & F_{2*} & 0 & \dots \\
0 & 0 & F_{1*}^* & F_{2*} & \dots\\
\dots&\dots&\dots&\dots&\dots\\
\end{bmatrix}\,,\\
V_{3*}=\begin{bmatrix} P^*&0&0&0&\dots\\
D_{P^*} & 0& 0 & 0 &\dots \\
0 & I & 0 & 0 & \dots \\
0 & 0 & I & 0 & \dots\\
\dots&\dots&\dots&\dots&\dots\\
\end{bmatrix}\,.
\end{gather*}
Obviously $(T_1^*,T_2^*,T_3^*)=(V_{1*},V_{2*},V_{3*})$. It is
clear from the block matrices of $T_i$ that $\mathcal H$ is a
common invariant subspace of each $T_i$ and $T_1|_{\mathcal
H}=A,\, T_2|_{\mathcal H}=B$ and $T_3|_{\mathcal H}=P$. Again
since $(T_1^*,T_2^*,T_3^*)$ is an $\mathbb E$-isometry, by Theorem
\ref{thm:ti}, there is an orthogonal decomposition $\mathcal
K_*=\mathcal K_1\oplus\mathcal K_2$ into reducing subspaces of
$T_i$ such that $(T_1|_{\mathcal K_1},T_2|_{\mathcal
K_1},T_3|_{\mathcal K_1})$ is an $\mathbb E$-unitary and
$(T_1|_{\mathcal K_2},T_2|_{\mathcal K_2},T_3|_{\mathcal K_2})$ is
a pure $\mathbb E$-co-isometry.

If we denote $(T_1|_{\mathcal K_1},T_2|_{\mathcal
K_1},T_3|_{\mathcal K_1})$ by $(T_{11},T_{12},T_{13})$ and
$(T_1|_{\mathcal K_2},T_2|_{\mathcal K_2},T_3|_{\mathcal K_2})$ by
$(T_{21},T_{22},T_{23})$, then with respect to the orthogonal
decomposition $\mathcal K_*=\mathcal K_1\oplus \mathcal K_2$ we
have that
 \[
 T_1=\begin{bmatrix}T_{11}&0\\0&T_{21} \end{bmatrix}\,,\,T_2=\begin{bmatrix}T_{12}&0\\0&T_{22} \end{bmatrix}\,,
 T_3=\begin{bmatrix}T_{13}&0\\0&T_{23} \end{bmatrix}.
 \]
The fundamental equations $T_1-T_2^*T_3=D_{T_3}X_1D_{T_3}$ and
$T_2-T_1^*T_3=D_{T_3}X_2D_{T_3}$ clearly become
\begin{align*}
\begin{bmatrix}T_{11}-T_{12}^*T_{13} &0\\0&T_{21}-T_{22}^*T_{23} \end{bmatrix}
=\begin{bmatrix}0&0\\0& D_{T_{23}}X_{12}D_{T_{23}}
\end{bmatrix},\; X_1=\begin{bmatrix}
X_{11}\\X_{12}\end{bmatrix}
\end{align*}
and
\begin{align*}
\begin{bmatrix}T_{12}-T_{11}^*T_{13} &0\\0&T_{22}-T_{21}^*T_{23} \end{bmatrix}
=\begin{bmatrix}0&0\\0& D_{T_{23}}X_{22}D_{T_{23}}
\end{bmatrix},\; X_2=\begin{bmatrix}
X_{21}\\X_{22}\end{bmatrix}.
\end{align*}
Thus $T_3$ and $T_{23}$ have same defect spaces, that is $\mathcal
D_{T_3}$ and $\mathcal D_{T_{23}}$ are same and consequently
$(T_1,T_2,T_3)$ and $(T_{21},T_{22},T_{23})$ have the same
fundamental operators. Now we apply Theorem \ref{model1} to the
pure $\mathbb E$-isometry
$(T_{21}^*,T_{22}^*,T_{23}^*)=(T_1^*|_{\mathcal
K_2},T_2^*|_{\mathcal K_2},T_3^*|_{\mathcal K_2})$ and get the
following:
\begin{enumerate}
\item[(i)] $\mathcal K_2$ can be identified with $H^2(\mathcal
D_{T_{23}}) (= H^2(\mathcal D_{T_3})$); \item[(ii)]
$(T_{21}^*,T_{22}^*,T_{23}^*)$ can be identified with the
 commuting triple of Toeplitz operators $(T_{G_{1}^*+G_{2}z},T_{G_{2}^*+G_{1} z},T_{z})$
 defined on $H^2(\mathcal D_{T_3})$, where $G_1,G_2$ are the
 fundamental operators of $(T_1,T_2,T_3)$.
\end{enumerate}
 Therefore, $T_1|_{\mathcal K_2}\,, T_2|_{\mathcal K_2}$ and $T_3|_{\mathcal
 K_2}$ are respectively unitarily equivalent to $T_{G_{1}+G_{2}^*\bar z},\,T_{G_{2}+G_{1}^*\bar z}$ and $T_{\bar
 z}$ defined on $H^2(\mathcal D_{T_3})$. The fact that $\mathcal D_{T_3}$ and $\mathcal D_P$
 have same dimensions follows from Proposition \ref{easyprop1} as $T_3^*$ is the minimal isometric dilation of $P^*$.
\end{proof}

\begin{rem}
Theorem \ref{thm:model2} is obtained by applying Theorem
\ref{model1} and Theorem \ref{thm:dilation} (which is Theorem 6.1
in \cite{tirtha}). Theorem \ref{model1} has intersection with
Theorem 5.10 in \cite{tirtha}. Theorem 5.10 in \cite{tirtha} gives
the form of a pure $\mathbb E$-isometry stated in Theorem
\ref{model1}. In Theorem \ref{model1} it has been shown that the
operator-valued kernels $\tau_1,\tau_2$ associated with the
Topelitz operators occurring in Theorem 5.10 of \cite{tirtha} can
be identified with the fundamental operators of the adjoint of the
mentioned pure $\mathbb E$-isometry.
\end{rem}

\noindent \textbf{Acknowledgement.} The author  greatly
appreciates the warm and generous hospitality provided by Indian
Statistical Institute, Delhi during the course of the work.

\end{document}